\documentclass[11pt,oneside,a4paper]{article}
\usepackage{hyperref}
\usepackage{lipsum}
\usepackage{amsfonts}
\usepackage{graphicx}
\usepackage{epstopdf}
\usepackage{amssymb}  
\usepackage{amsthm}
\usepackage{amsmath}

\usepackage{geometry}
\usepackage[french]{layout}
\geometry{hmargin=3.1cm,tmargin=3.1cm,bmargin=3.1cm,headheight=15pt,headsep = 25pt}
\usepackage{fancyhdr} 
\pagestyle{fancy}
\fancyhead[L]{Controllability of the continuity equation} 
\fancyhead[R]{ M. Duprez, M. Morancey, F. Rossi}
\fancyfoot[C]{}
\fancyfoot[R]{\thepage}

\title{Approximate and exact controllability of the continuity equation with a localized vector field\thanks{This work has been carried out in the framework of Archim\`ede Labex (ANR-11-LABX-0033) and of the A*MIDEX project (ANR-11-IDEX-0001-02), funded by the ``Investissements d'Avenir" French Government programme managed by the French National Research Agency (ANR). The authors acknowledge the support of the ANR project CroCo ANR-16-CE33-0008.}}

\author{
  Michel Duprez\thanks{Aix Marseille Universit\'e, CNRS, Centrale Marseille, I2M,  Marseille, France.   (\href{mailto:mduprez@math.cnrs.fr}{mduprez@math.cnrs.fr}), Corresponding author.}
  \and
  Morgan Morancey\thanks{Aix Marseille Universit\'e, CNRS, Centrale Marseille, I2M, Marseille, France. (\href{mailto:morgan.morancey@univ-amu.fr}{morgan.morancey@univ-amu.fr}).}
  \and
  Francesco Rossi\thanks{Dipartimento di Matematica ``Tullio Levi-Civita", Universit\`{a} degli Studi di Padova, Via Trieste 63, 35121 Padova, Italy. (\href{mailto:{francesco.rossi@math.unipd.it}}{francesco.rossi@math.unipd.it}). }
}

\usepackage{amsopn}

\newcommand{\Supp}{\operatorname{supp}} 
\newcommand{\supp}{\operatorname{supp}}

\usepackage{tikz}
\usetikzlibrary{patterns}
\usetikzlibrary{arrows}
\tikzset{cross/.style={path picture={
  \draw[black]
    (path picture bounding box.south east)--(path picture bounding box.north west)
    (path picture bounding box.south west)--(path picture bounding box.north east);
}}}

\newcommand{\storto}[5]{
\pgfmoveto{\pgfxy(#1)}
\pgfcurveto{\pgfxy(#2)}{\pgfxy(#2)}{\pgfxy(#3)}
\pgfmoveto{\pgfxy(#3)}
\pgfcurveto{\pgfxy(#4)}{\pgfxy(#4)}{\pgfxy(#5)}
\pgfstroke}

\newcommand{\Div}{\operatorname{div}} 

\newcommand{\mb}[1]{\mathbb{#1}}
\newcommand{\mc}[1]{\mathcal{#1}}

\newcommand{\mr}[1]{\mathrm{#1}}

\usepackage{dsfont}

\theoremstyle{definition} 
\newtheorem{definition}{\textsc{Definition}}
\newtheorem{cond}{\textsc{Condition}}[section]

\theoremstyle{remark}
\newtheorem{rmq}{\textsf{Remark}}

\theoremstyle{plain}
\newtheorem{theorem}{\textsc{Theorem}}[section]
\newtheorem{lemma}{\textsc{Lemma}}[section]
\newtheorem{prop}{\textsc{Proposition}}[section]
\newtheorem{propi}{\textsf{Property}}[section]

\usepackage{amssymb}

\begin{document}

\maketitle

\begin{abstract}
We study controllability of a Partial Differential Equation of transport type, that arises in crowd models. We are interested in controlling it with a control being a vector field, representing a perturbation of the velocity,  localized on a fixed control set.

We prove that, for each initial and final configuration, one can steer approximately one to another with Lipschitz controls when the uncontrolled dynamics allows to cross the control set.
We also show that the exact controllability only holds for controls with less regularity,
for which one may lose uniqueness of the associated solution.

\end{abstract}

%

\section{Introduction}

In recent years, the study of systems describing a crowd of interacting autonomous agents 
has drawn a great interest from the control community (see \textit{e.g.} the Cucker-Smale model \cite{CS07}).
A better understanding of such interaction phenomena can have a strong impact in several key applications, such as road traffic and egress problems for pedestrians. For a few reviews about this topic, see \textit{e.g.} \cite{axelrod,active1,camazine,CPTbook,helbing,jackson,MT14,SepulchreReview}.

Beside the description of interactions, it is now relevant to study problems of {\bf control of crowds}, \textit{i.e.} of controlling such systems by acting on few agents, or on the crowd localized in a small subset of the configuration space. The nature of the control problem relies on the model used to describe the crowd. Two main classes are widely used. 

In {\bf microscopic models}, the position of each agent is clearly identified; the crowd dynamics is described by a large dimensional ordinary differential equation, in which couplings of terms represent interactions. For control of such models, a large literature is available from the control community, under the generic name of networked control (see \textit{e.g.} \cite{bullo,kumar,lin}). There are several control applications to pedestrian crowds \cite{ferscha,luh} and  road traffic \cite{canudas,hegyi}.

In {\bf macroscopic models}, instead, the idea is to represent the crowd by the spatial density of agents; in this setting, the evolution of the density solves a partial differential equation of transport type. 
Nonlocal terms (such as convolution) model the interactions between the agents. 
In this article, we focus on this second approach,
\textit{i.e.} macroscopic models. 
%
To our knowledge, there exist few studies of control of this family of equations.
In \cite{PRT15}, the authors provide approximate alignment of a crowd described by the 
macroscopic Cucker-Smale model \cite{CS07}. The control is the acceleration, and it is localized in a control region $\omega$ which moves in time. In a similar situation, a stabilization strategy has been established in  \cite{CPRT17,CPRT17b}, by generalizing the Jurdjevic-Quinn method to partial differential equations.

A different approach is given by mean-field type control, \textit{i.e.} control of mean-field equations and of mean-field games modeling crowds. See \textit{e.g.} \cite{achdou2,achdou1,carmona,FS}. In this case, problems are often of optimization nature, \textit{i.e.} the goal is to find a control minimizing a given cost. In this article, we are mainly interested in controllability 
 problems, for which mean-field type control approaches seem not adapted.


In this article, we study a macroscopic model, thus the crowd is represented by its density, that is a time-evolving measure $\mu(t)$ defined for  positive  times $t$ on the space $\mb{R}^d$ ($d\geq 1$).
The natural (uncontrolled) velocity field for the measure is denoted by  $v: \mb{R}^d\rightarrow\mb{R}^d$, being a vector field assumed Lipschitz and uniformly bounded.
We act on the velocity field in a fixed portion  $\omega$ of the space, 
which will be a  {\bf nonempty open connected subset} of $\mb{R}^d$. 
The admissible controls are thus functions of the form $\mathds{1}_{\omega}u:\mb{R}^d\times\mb{R}^+\rightarrow\mb{R}^d$.
We then consider the following  linear transport equation 
\begin{equation}\label{eq:transport}
	\left\{
	\begin{array}{ll}
\partial_t\mu +\nabla\cdot((v+\mathds{1}_{\omega}u)\mu)=0&\mbox{ in }\mb{R}^d\times\mb{R}^+,\\\noalign{\smallskip}
\mu(0)=\mu^0&\mbox{ in }\mb{R}^d,\\
	\end{array}
	\right.
\end{equation}
where 
$\mu^0$ is the initial data (initial configuration of the crowd) and the  function $u$ is an admissible control. 
The function $v+\mathds{1}_{\omega}u$ represents the velocity field acting on $\mu$.
System \eqref{eq:transport} is a first simple approximation for crowd modelling, since the uncontrolled vector field $v$ is given, and it does not describe interactions between agents. Nevertheless, it is necessary to understand controllability properties for such simple equation
as a first step, before dealing with velocity fields depending on the crowd itself. 
Thus, in a future work, we will study controllability of crowd models with a nonlocal term $v[\mu]$, based on the linear results presented here. 

Even though System \eqref{eq:transport} is linear, the control acts on the velocity, 
thus the control problem is nonlinear, which is one of the main difficulties in this study.

\smallskip


The goal of this work is to study the control properties of System \eqref{eq:transport}.
We now recall the notion of approximate controllability and exact controllability for System \eqref{eq:transport}. 
We say that System \eqref{eq:transport} is {\it approximately controllable}
from $\mu^0$ to $\mu^1$ on the time interval $[0,T]$ if 
we can steer the solution to  System \eqref{eq:transport} at time $T$ as close to $\mu^1$ as we want with an appropriate control $\mathds{1}_{\omega}u$.
Similarly, we say that 
System \eqref{eq:transport} is  {\it exactly controllable}
from $\mu^0$ to $\mu^1$ 
on the time interval $[0,T]$  if we can steer the solution to  System \eqref{eq:transport} at time $T$ exactly to $\mu^1$ with an appropriate control $\mathds{1}_{\omega}u$.
In Definition \ref{def:approx} below, we give a formal definition of the notion of
 approximate controllability in terms of Wasserstein distance. 

\smallskip

The main results of this article show that approximate and exact controllability depend on two main aspects:
first, from a geometrical point of view, the uncontrolled vector field $v$ needs to send the support of $\mu^0$ to $\omega$ forward in time and the support of $\mu^1$ to $\omega$ backward in time. 
This idea is formulated in the following Condition:

\begin{cond}[Geometrical condition]\label{cond1}
Let $\mu^0,\mu^1$ be two probability measures on $\mb{R}^d$ satisfying:
\begin{enumerate}
\item[(i)] For each $x^0\in \supp(\mu^0)$, 
there exists $t^0>0$ such that $\Phi_{t^0}^v(x^0)\in \omega,$
where $\Phi_{t}^v$ is the \textit{flow} associated to $v$, \textit{i.e.} the solution to the Cauchy problem 
\begin{equation*}
\left\{\begin{array}{l}
\dot x(t) =v(x(t))\mbox{ for a.e. }t>0,\\\noalign{\smallskip}
x(0)=x^0.
\end{array}\right.
\end{equation*}

\item[(ii)] For each $x^1\in \supp(\mu^1)$, 
there exists $t^1>0$ such that $\Phi_{-t^1}^{v}(x^1)\in \omega$.
\end{enumerate}
\end{cond}

This geometrical aspect is illustrated in Figure \ref{fig:cond geo}.

\begin{figure}[htb]
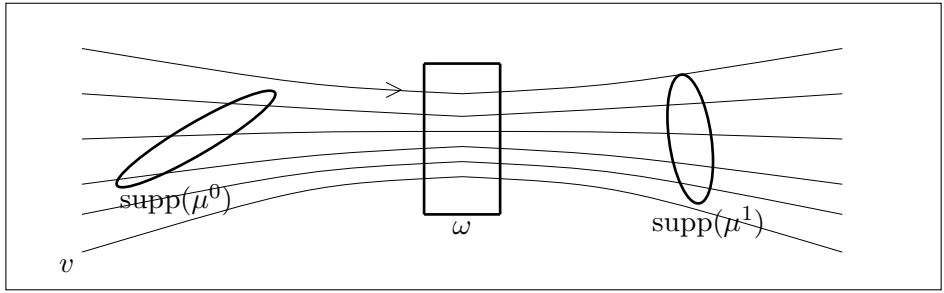


\begin{center}
\begin{pgfpictureboxed}{0cm}{0cm}{12.3 cm}{3.8cm}

\begin{pgfscope}
\pgfsetstartarrow{\pgfarrowswap{\pgfarrowto}}
\pgfsetendarrow{\pgfarrowto}
\pgfsetlinewidth{.2pt}
\storto{1,0.5}{4,1.4}{6,2-0.5}{8,1.9-0.5}{11,1-0.5}
\storto{1,1.5-0.5}{4,2.1-0.5}{6,2.2-0.5}{8,2.1-0.5}{11,1.5-0.5}
\storto{1,1.9-0.5}{4,2.3-0.5}{6,2.4-0.5}{8,2.3-0.5}{11,1.9-0.5}
\storto{1,2.5-0.5}{4,2.6-0.5}{6,2.6-0.5}{8,2.6-0.5}{11,2.5-0.5}
\storto{1,3.1-0.5}{4,2.9-0.5}{6,2.8-0.5}{8,2.9-0.5}{11,3.1-0.5}
\storto{1,3.7-0.5}{4,3.2-0.5}{6,3.1-0.5}{8,3.2-0.5}{11,3.7-0.5}
\pgfline{\pgfxy(5,2.75)}{\pgfxy(5.2,2.65)}
\pgfline{\pgfxy(5,2.55)}{\pgfxy(5.2,2.65)}
\end{pgfscope}

\begin{pgfscope}
\pgfsetlinewidth{1pt}
\pgfline{\pgfxy(5.5,1)}{\pgfxy(5.5,3)}
\pgfline{\pgfxy(6.5,1)}{\pgfxy(6.5,3)}
\pgfline{\pgfxy(5.5,1)}{\pgfxy(6.5,1)}
\pgfline{\pgfxy(5.5,3)}{\pgfxy(6.5,3)}
\pgfellipse[stroke]{\pgfxy(2.5,2)}{\pgfxy(0.3,0.4)}{\pgfxy(1,0.5)}
\pgfellipse[stroke]{\pgfxy(9,2)}{\pgfxy(0,.8)}{\pgfxy(-.3,0.3)}
\end{pgfscope}

\pgfputat{\pgfxy(1.5,1.4)}{\pgfbox[left,top]{$\supp(\mu^0)$}}
\pgfputat{\pgfxy(6,.9)}{\pgfbox[center,top]{$\omega$}}
\pgfputat{\pgfxy(8.5,1.1)}{\pgfbox[left,top]{$\supp(\mu^1)$}}
\pgfputat{\pgfxy(0.7,0.4)}{\pgfbox[left,top]{$v$}}
\end{pgfpictureboxed}

\caption{Geometrical condition.}
\label{fig:cond geo}
\end{center}
\end{figure}

\begin{rmq}
Condition \ref{cond1} is the minimal one that we can expect to steer any initial condition to 
any target.
Indeed, if there exists a point $x^0$ of the interior of $\supp(\mu^0)$
for which the first item of Condition \ref{cond1} is not satisfied, 
then there exists a whole subpopulation of the measure $\mu^0$ that never intersects the control region,
thus we cannot act on it. 
\end{rmq}

The second aspect that we want to highlight is the following: 
The measures $\mu^0$ and $\mu^1$ need to be sufficiently regular  with
respect to the flow generated by $v+\mathds{1}_{\omega}u$. Three cases are particularly relevant: 

\bigskip

\textbf{a) Controllability with Lipschitz controls}\\
 If we impose the classical Carath\'eodory condition of $\mathds{1}_{\omega}u$ being Lipschitz in space, measurable in time and uniformly bounded, 
 then the flow $\Phi^{v+\mathds{1}_{\omega}u}_t$ is an homeomorphism (see \cite[Th. 2.1.1]{BP07}).
As a result, one can expect  approximate 
controllability only, since for general measures there exists no homeomorphism sending one to another.
For more details, see Section \ref{sec:non exact cont}.
We then have the following result:

\begin{theorem}[Main result - Controllability with Lipschitz control]\label{th cont approx}
Let  $\mu^0$, $\mu^1$ be two probability measures on $\mb{R}^d$ 
compactly supported, absolutely continuous with respect to the Lebesgue measure and satisfying Condition \ref{cond1}. 
Then there exists $T$ such that System \eqref{eq:transport} is 
\textbf{approximately controllable} on the time interval $[0,T]$
from $\mu^0$ to $\mu^1$
with a control $\mathds{1}_{\omega}u:\mb{R}^d\times\mb{R}^+\rightarrow\mb{R}^d$ uniformly bounded, Lipschitz in space and  measurable in time.
\end{theorem}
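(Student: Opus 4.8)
The plan is to reduce approximate controllability to a purely geometric statement about pushing all of $\mu^0$ through the control set $\omega$ and then re-assembling it into $\mu^1$, exploiting the absolute continuity of both measures and the fact that Lipschitz controls only need to steer the density "most of the way." First I would fix a large but finite time horizon: since $\mu^0$ is compactly supported and, by Condition \ref{cond1}(i), every point $x^0\in\supp(\mu^0)$ reaches $\omega$ at some time $t^0>0$, a standard compactness and continuous-dependence argument for the flow $\Phi^v_t$ shows there is a uniform time $T_0$ such that $\Phi^v_{T_0}$ maps a set of full $\mu^0$-measure into $\omega$, up to a small-measure exceptional set. Symmetrically, using Condition \ref{cond1}(ii) and running $v$ backward, there is $T_1$ such that $\Phi^v_{-T_1}$ maps almost all of $\supp(\mu^1)$ into $\omega$. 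Because $\mu^0,\mu^1$ are absolutely continuous and the flows are homeomorphisms (using \cite[Th.~2.1.1]{BP07}), the pushforwards $(\Phi^v_{T_0})_\#\mu^0$ and $(\Phi^v_{-T_1})_\#\mu^1$ remain absolutely continuous, and we may discard an $\varepsilon$-mass piece that does not enter $\omega$ without affecting the Wasserstein distance by more than a controlled amount.

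The core of the argument is then: given two absolutely continuous probability measures $\nu^0$ (the "incoming" one, concentrated in $\omega$) and $\nu^1$ (the "outgoing" one, concentrated in $\omega$), build a Lipschitz-in-space, bounded, measurable-in-time vector field supported in $\omega$ whose flow carries $\nu^0$ approximately onto $\nu^1$ in some time $T_*$. Here I would invoke an approximation step: approximate both $\nu^0$ and $\nu^1$ in Wasserstein distance by measures with smooth, bounded-below densities on a common compact subset $K\Subset\omega$; for such regular densities, an explicit time-dependent transport (e.g. via the continuity-equation/optimal-transport interpolation $\nu_t$, whose velocity field $w_t = \nabla\varphi_t$ is Lipschitz when the densities are smooth and bounded away from zero on $K$) gives the desired controlled flow. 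One then cuts off this vector field smoothly to be supported in $\omega$, arranging $K$ to contain all trajectories; the cutoff is harmless because the interpolating measures stay inside $K$.

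Finally I would concatenate the three phases in time: on $[0,T_0]$ use $u\equiv 0$ and let $v$ carry (most of) $\mu^0$ into $\omega$; on $[T_0,T_0+T_*]$ switch on the constructed control (adding $-v$ inside $\omega$ if one wants to neutralize the drift, which is admissible since $v$ is Lipschitz and bounded) to move the gathered mass from the incoming profile to the outgoing profile; and on $[T_0+T_*,T_0+T_*+T_1]$ again set $u\equiv 0$ so that $v$ transports this mass from $\omega$ forward to $\mu^1$. Chaining the $\varepsilon$-errors and invoking the triangle inequality for the Wasserstein distance, together with the fact that the flow of a bounded vector field is Wasserstein-Lipschitz in the initial datum, yields the claimed approximate controllability with $T = T_0 + T_* + T_1$. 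The main obstacle is the middle phase: controlling the Lipschitz constant and the support of the transporting vector field simultaneously — optimal transport maps between general absolutely continuous measures need not be Lipschitz, so the regularization of $\nu^0,\nu^1$ into smooth densities bounded away from zero on a fixed compact set, and the verification that the resulting interpolation velocity is genuinely Lipschitz and compactly supported in $\omega$, is the delicate point and the reason only \emph{approximate} controllability is obtained.
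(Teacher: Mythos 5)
Your three-phase architecture (gather into $\omega$, transport inside $\omega$, release toward $\mu^1$) is the right skeleton and matches the paper, but the first and third phases as you describe them contain a genuine gap. You claim that compactness plus continuous dependence yields a single time $T_0$ at which $\Phi^v_{T_0}$ maps all but an $\varepsilon$-mass of $\mu^0$ into $\omega$, and you then propose to take $u\equiv 0$ on $[0,T_0]$. This is false in general: Condition \ref{cond1} only guarantees that each trajectory \emph{visits} $\omega$ at some individual time $t^0(x^0)\le T_0^*$, and nothing prevents it from leaving $\omega$ afterwards. Take $d=2$, $v=(1,0)$, $\omega$ a small ball and $\supp(\mu^0)$ a long thin horizontal strip upstream of $\omega$: every point passes through $\omega$, yet at no single time does more than a small fraction of the mass sit inside $\omega$. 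This is exactly why the paper cannot use the free flow in the gathering phase: in Proposition \ref{prop1} it switches on the control $u_k=(\theta_k-1)v$ \emph{inside} $\omega$, so that the effective field $\theta_k v$ freezes each trajectory once it enters a slightly shrunk set $\omega_k\subset\subset\omega_0$, thereby storing the mass as it arrives; the nontrivial part (Steps 2--3 there) is showing that only an $\varepsilon$-mass of trajectories graze $\partial\omega_0$ without being trapped. The same issue affects your final phase run backward from $\mu^1$. Without this active storing mechanism your concatenation does not produce two measures simultaneously supported in $\omega$, so the middle phase never gets a valid input.

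Your middle phase is a legitimately different route from the paper's: instead of the explicit grid construction of Proposition \ref{prop dim=d} (equal-mass cells $\widetilde{A}_{ij}\to\widetilde{B}_{ij}$ moved by piecewise-affine flows, with the error vanishing as the grid is refined), you regularize both measures to smooth densities bounded away from zero on a compact convex $K\subset\subset\omega$ and use the displacement-interpolation velocity field. This can be made to work, but it is heavier than you suggest: the Lipschitz character of $w_t=(T-\mathrm{id})\circ T_t^{-1}$ rests on Caffarelli's regularity theory for the Brenier map, which requires convexity of the target support and densities bounded above and below --- hypotheses you must arrange explicitly (and which the paper's elementary construction avoids entirely). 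You would also still need the analogue of Proposition \ref{prop2}, i.e.\ a preliminary step funnelling both gathered measures into a \emph{common} convex set $S\subset\subset\omega$ before the interpolation argument applies. So: fix the gathering/releasing phases by controlling inside $\omega$ (not by free transport), and either adopt the paper's grid construction or fully justify the optimal-transport regularity in the middle step.
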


We give a proof of Theorem \ref{th cont approx} in Section \ref{sec:cont approx}.

\bigskip
\textbf{b) Controllability with BV controls}\\
To hope to obtain  exact controllability of System \eqref{eq:transport}, 
it is then necessary to search among controls $\mathds{1}_{\omega}u$ with less regularity. 
A weaker condition on the regularity of the velocity field 
for the well-posedness of System \eqref{eq:transport} has been given in \cite{A04} for  vector fields $v$ with bounded variations (we will use the abbreviation BV vector fields) satisfying in particular
\vspace*{-2mm}\begin{equation}\label{cond ambr}
\displaystyle\int_0^T\|[\Div v]^-\|_{L^{\infty}(\mb{R}^d)}<+\infty.
\end{equation}
As it will be explained in Section \ref{sec:non exact cont},
if we choose the admissible controls satisfying the  setting of \cite{A04},
then it is not necessary that the support of $\mu^0$ and $\mu^1$ are homeomorph
for exact controllability, but it is not sufficient either (see Section \ref{sec:non exact cont}).
For example,
Condition \eqref{cond ambr} allows to steer a measure which support is connected 
to a measure which support is composed of two connected components, 
but the inverse is forbidden.
Thus, even this setting does not allow to yield exact controllability.

\bigskip

\textbf{c) Controllability with Borel controls}\\
We then consider an even larger class of controls, that are Borel vector fields. In this setting, we have exact controllability under the geometrical Condition \ref{cond1}. The main drawback is that, in this less regular setting, System \eqref{eq:transport} is necessarily not well-posed. In particular, one has not necessarily uniqueness of the solution. 
For this reason, one needs to describe solutions to System \eqref{eq:transport} as pairs $(\mathds{1}_{\omega} u, \mu)$, where $\mu$ is one among the admissible solutions with control $\mathds{1}_{\omega}u$. 



\begin{theorem}[Main result - Controllability with Borel control]\label{th cont exact}
Let  $\mu^0,\mu^1$ be two probability measures on $\mb{R}^d$ 
compactly supported and satisfying Condition \ref{cond1}. 
Then, there exists $T>0$ such that System \eqref{eq:transport} is \textbf{exactly controllable} on the time interval $[0,T]$
from $\mu^0$ to $\mu^1$
in the following sense: there exists a couple $(\mathds{1}_{\omega}u,\mu)$ composed of  
a Borel vector field $\mathds{1}_{\omega}u:\mb{R}^d\times\mb{R}^+\rightarrow\mb{R}^d$ 
and  a time-evolving measure $\mu$
being  weak  solution to  System \eqref{eq:transport} (see Definition \ref{def:weak})
 and satisfying  
 \vspace*{-2mm}\begin{equation*}
 \mu(T)=\mu_1.
 \end{equation*}
\end{theorem}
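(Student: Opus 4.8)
\emph{Proof proposal.} The idea is to exploit the freedom offered by Borel velocity fields to \emph{first funnel all the mass of $\mu^0$ into $\omega$ and collapse it onto a single Dirac mass $\delta_{\bar x}$ with $\bar x\in\omega$, then spread $\delta_{\bar x}$ back out onto $\mu^1$}, and finally to read off the control as the barycentric velocity of the Lagrangian (superposition) representation of the resulting curve of measures. All the singular behaviour — in particular the non-injectivity of the flow, hence the non-uniqueness of $\mu$ for the constructed control — is concentrated at $\bar x$; this is precisely why Lipschitz or $BV$ controls cannot work here and why the statement is formulated in terms of the pair $(\mathds{1}_{\omega}u,\mu)$. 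Throughout, $e_t$ denotes the evaluation map $e_t(\gamma)=\gamma(t)$ on $C([0,T];\mb{R}^d)$, and we use that $v$ Lipschitz and bounded makes $\Phi^v_t$ a well-defined flow of homeomorphisms.

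\textbf{Step 1 (gathering $\mu^0$ onto $\delta_{\bar x}$).} Fix $\bar x\in\omega$. By Condition \ref{cond1}(i) and continuity of $\Phi^v_t$, for each $x^0\in\supp(\mu^0)$ there are $s(x^0)>0$ and an open neighborhood $U(x^0)$ with $\Phi^v_{s(x^0)}(U(x^0))\subset\omega$. By compactness of $\supp(\mu^0)$, extract a finite subcover and a subordinate finite Borel partition $\supp(\mu^0)=\bigsqcup_i V_i$, $\Phi^v_{s_i}(V_i)\subset\omega$. Since $\overline{\Phi^v_{s_i}(V_i)}$ is a compact subset of the open connected (hence path-connected) set $\omega$, after refining the $V_i$ one may select, in a Borel (indeed piecewise, affinely varying) way in $x^0$, a Lipschitz curve contained in $\omega$ joining $\Phi^v_{s_i}(x^0)$ to $\bar x$. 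Let $T_0=\max_i s_i$, fix $T_1>T_0$ large enough to traverse all these curves at bounded speed, and for $x^0\in V_i$ define $\gamma^{(1)}_{x^0}$ on $[0,T_1]$ by $\gamma^{(1)}_{x^0}(t)=\Phi^v_t(x^0)$ on $[0,s_i]$ and by the chosen $\omega$-curve, time-parametrized to reach $\bar x$ exactly at $T_1$, on $[s_i,T_1]$. Setting $\eta_1=(\Gamma_1)_{\#}\mu^0$ with $\Gamma_1(x^0)=\gamma^{(1)}_{x^0}$, the measures $\mu^{(1)}(t)=(e_t)_{\#}\eta_1$ are probability measures, compactly supported (as $v$ is bounded and the curves lie in a compact subset of $\omega$ on $[T_0,T_1]$), with $\mu^{(1)}(0)=\mu^0$ and $\mu^{(1)}(T_1)=\delta_{\bar x}$.

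\textbf{Step 2 (spreading $\delta_{\bar x}$ onto $\mu^1$) and concatenation.} Apply the same construction to $\mu^1$ and the \emph{backward} flow: by Condition \ref{cond1}(ii) and compactness there are a finite Borel partition $\supp(\mu^1)=\bigsqcup_j W_j$ and times $r_j>0$ with $\Phi^v_{-r_j}(W_j)\subset\omega$; put $R_0=\max_j r_j$, fix $T_2>R_0$ large enough (as above) and set $T=T_1+T_2$. For $x^1\in W_j$ define $\gamma^{(2)}_{x^1}$ on $[T_1,T]$ to run, inside $\omega$ and at bounded speed, from $\bar x$ at time $T_1$ to $\Phi^v_{-r_j}(x^1)$ at time $T-r_j$, and then to follow $t\mapsto\Phi^v_{t-(T-r_j)}(\Phi^v_{-r_j}(x^1))$ on $[T-r_j,T]$, so that $\gamma^{(2)}_{x^1}(T)=x^1$; the selection is Borel by the same argument. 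Now let $\eta$ be the image of $\mu^0\otimes\mu^1$ under $(x^0,x^1)\mapsto$ the concatenation at time $T_1$ of $\gamma^{(1)}_{x^0}$ and $\gamma^{(2)}_{x^1}$ (these match at $\bar x$), and set $\mu(t)=(e_t)_{\#}\eta$. Then $\mu$ is a narrowly (indeed $1$-Wasserstein) continuous curve of compactly supported probability measures with $\mu(0)=\mu^0$ and $\mu(T)=\mu^1$. Disintegrating $\eta$ along the evaluation maps and taking, for a.e.\ $t$, the $\mu(t)$-barycenter of the velocities $\dot\gamma(t)$,
\[
(v+\mathds{1}_{\omega}u)(x,t)\,\mu(t)(dx)\;=\;\int \dot\gamma(t)\,\delta_{\gamma(t)}(dx)\,\eta(d\gamma),
\]
defines a Borel vector field which, by the superposition principle, makes $(\mathds{1}_{\omega}u,\mu)$ a weak solution of \eqref{eq:transport} in the sense of Definition \ref{def:weak}. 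Finally, every curve of $\eta$ moves with velocity exactly $v$ whenever it lies outside $\omega$ (the only phases leaving $\omega$ are the pure $v$-flow phases on $[0,s_i]$ and $[T-r_j,T]$), so the barycentric identity forces $v+\mathds{1}_{\omega}u=v$ off $\omega$; hence $u:=\mathds{1}_{\omega}\big((v+\mathds{1}_{\omega}u)-v\big)$ is genuinely supported in $\omega$, and the pair $(\mathds{1}_{\omega}u,\mu)$ has all the required properties.

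\textbf{Main obstacle.} The geometric content of the construction is immediate from Condition \ref{cond1}, compactness of the supports, and path-connectedness of $\omega$; the real work is the bookkeeping. First, one must perform the selections of the entrance times $s_i,r_j$ and of the connecting curves inside $\omega$ in a Borel way in the initial/final point — this is handled by reducing, via compactness, to finitely many pieces, so that the selections are piecewise and trivially Borel, and by choosing the $\omega$-curves to depend (affinely) continuously on their moving endpoints. Second, one must justify carefully that the barycentric velocity field associated with $\eta$ is Borel, is integrable against $\mu$ over $[0,T]$, and yields a weak solution — this is exactly the (reverse) superposition machinery, and is the step where one should invoke the tools recalled earlier in the paper. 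The unavoidable price, already built into the statement, is that $\mu$ is not determined by $\mathds{1}_{\omega}u$, since all curves of $\eta$ pass through $\bar x$.
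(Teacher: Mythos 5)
Your argument is correct in substance, but it takes a genuinely different route from the paper. The paper proceeds modularly through three propositions: it first ``freezes'' the mass upon entry into $\omega$ via a stopped flow $\Psi_t$, then concentrates it into a convex set $S\subset\subset\omega$ using a gradient field $k\nabla\eta$ built from a Carleman-type weight, and finally connects the two intermediate measures inside $S$ by the Wasserstein geodesic $\mu(t)=\frac{1}{\delta}[(\delta-t)p_1+tp_2]\#\pi$ associated with an optimal plan $\pi$; in each step the Borel velocity field is produced by showing the curve is absolutely continuous in $W_2$ and invoking the metric theory of \cite[Th.~8.3.1]{AGS05}, after which one checks it agrees with $v$ (up to a component in $\mathrm{Tan}_{\mu(t)}^{\perp}$) outside $\omega$. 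You instead build a single Lagrangian object: every trajectory of $\mu^0$ is funnelled to a common point $\bar x\in\omega$ and every trajectory of $\mu^1$ is emitted from it, the coupling being the product $\mu^0\otimes\mu^1$ routed through $\bar x$, and the control is read off as the barycentric projection of the path measure. What your approach buys is economy: it needs neither the weight function of \cite{FI96} nor optimal transport plans, the geometric part reduces to path-connectedness of $\omega$ plus compactness, and the source of non-uniqueness is made completely explicit (total collapse onto a Dirac). What the paper's approach buys is that each intermediate step parallels a step of the approximate-controllability proof, and the velocity field it produces lies in the tangent space with $L^2(\mu(t))$-norm equal to the metric derivative, avoiding the maximally singular Dirac configuration. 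Both proofs ultimately lean on the same nontrivial input --- passing from a curve of measures to a Borel velocity field solving the continuity equation --- the paper via \cite[Th.~8.3.1]{AGS05} and you via the barycentric/superposition construction (essentially \cite[Th.~8.2.1]{AGS05}); the two points you should still write out carefully are the Borel selection of the connecting curves (your finite-cover reduction does handle this) and the joint $(x,t)$-measurability of the disintegration defining $u$, both of which are standard but not free.
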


A proof of Theorem \ref{th cont exact} is given in Section \ref{sec:exact cont}.


\smallskip

This paper is organised as follows. 
In Section \ref{section 2}, we recall basic properties of the Wasserstein distance and the continuity equation.
Section \ref{sec:cont approx} is devoted to the proof of  Theorem  \ref{th cont approx}, \textit{i.e.} the approximate controllability of System \eqref{eq:transport} with a Lipschitz localized vector field.
Finally, in Section \ref{sec:exact cont}, 
we first show that exact controllability does not hold for Lipschitz or BV controls; 
we also  prove Theorem \ref{th cont exact}, \textit{i.e.} exact controllability of System \eqref{eq:transport} with a Borel localized vector field.

\section{The Wasserstein distance and the  continuity equation}\label{section 2}

In this section, we recall the definition and some properties of the  Wasserstein distance 
and the continuity equation,
which will be used all along this paper. 
We denote by $\mc{P}_c(\mb{R}^d)$ the space of probability measures in $\mb{R}^d$ with compact support
and for $\mu,~\nu\in\mc{P}_c(\mb{R}^d)$, we denote by $\Pi(\mu,\nu)$ the set of \textit{transference plans}
 from $\mu$ to $\nu$, \textit{i.e.} the probability measures on $\mb{R}^d\times\mb{R}^d$ 
 satisfying
\begin{equation*}
 \int_{\mb{R}^d}d\pi(x,\cdot)=d\mu(x)
 \mbox{ and }\int_{\mb{R}^d}d\pi(\cdot,y)=d\nu(y).
\end{equation*}

\begin{definition}
Let $p\in[1,\infty)$ and $\mu,\nu\in \mc{P}_c(\mb{R}^d)$. Define 
\begin{equation}\label{def:wass plan}
W_p(\mu,\nu)=\inf\limits_{\pi\in\Pi(\mu,\nu)}\left\{\left(\displaystyle\iint_{\mb{R}^d\times\mb{R}^d}
|x-y|^pd\pi\right)^{1/p}\right\}.
\end{equation}
The quantity is called the \textbf{Wasserstein distance}.
\end{definition}

This is the idea of \textit{optimal transportation},
consisting  in finding the optimal way to transport
mass from a given measure to another.
For a thorough introduction, see \textit{e.g.} \cite{V03}.

We denote by $\Gamma$  the set of Borel maps $\gamma:\mb{R}^d\rightarrow\mb{R}^d$.
We now recall the definition of the \textit{push-forward} of a measure:
\begin{definition}
For a $\gamma\in\Gamma$, 
we define the {\it push-forward} $\gamma\#\mu$ of a measure $\mu$ of $\mb{R}^d$ as follows:
\vspace*{-2mm}\begin{equation*}
(\gamma\#\mu)(E):=\mu(\gamma^{-1}(E)),
\end{equation*}
for every subset $E$ such that $\gamma^{-1}(E)$ is $\mu$-measurable.
\end{definition}

We denote by ``AC measures''
the measures which are absolutely continuous with respect to the Lebesgue measure
and by $\mc{P}_c^{ac}(\mb{R}^d)$  the subset  of $\mc{P}_c(\mb{R}^d)$ of AC measures. 
On $\mc{P}_c^{ac}(\mb{R}^d)$, the Wasserstein distance can be reformulated as follows:
\begin{propi}[{see \cite[Chap. 7]{V03}}]
Let $p\in[1,\infty)$ and $\mu,\nu\in \mc{P}^{ac}_c(\mb{R}^d)$. It holds
\begin{equation}\label{def:Wp}
W_p(\mu,\nu)=\inf\limits_{\gamma\in\Gamma}\left\{\left(\displaystyle\int_{\mb{R}^d}
|\gamma(x)-x|^pd\mu\right)^{1/p}:\gamma\#\mu=\nu\right\}.
\end{equation}
\end{propi}
The Wasserstein distance satisfies some useful properties:
\begin{propi}[{see \cite[Chap. 7]{V03}}]\label{prop Wp}
Let $p\in[1,\infty)$.
\begin{enumerate}
\item[(i)]
The  Wasserstein distance $W_p$ is a  distance on $\mc{P}_c(\mb{R}^d)$.
\item [(ii)] The topology induced by the Wasserstein distance $W_p$ on $\mc{P}_c(\mb{R}^d)$ coincides with the weak topology.
\item[(iii)]For all $\mu,\nu\in \mc{P}_c^{ac}(\mb{R}^d)$, the infimum in \eqref{def:Wp}
 is achieved by at least one minimizer.
\end{enumerate}
\end{propi}

The Wasserstein distance can be extended to all pairs of measures $\mu,\nu$ 
compactly supported with the same total mass $\mu(\mb{R}^d)=\nu(\mb{R}^d)\neq0$, by the formula
$$W_p(\mu,\nu)=\mu(\mb{R}^d)^{1/p} W_p\left(\frac{\mu}{\mu(\mb{R}^d)},\frac{\nu}{\nu(\mb{R}^d)}\right).$$


In the rest of the paper, the following properties of the Wasserstein distance will be also helpful:
\begin{propi}[see \cite{PR13,V03}]
Let $\mu,~\rho,~\nu,~\eta$ be four positive measures compactly supported satisfying $\mu(\mb{R}^d)=\nu(\mb{R}^d)$ and $\rho(\mb{R}^d)=\eta(\mb{R}^d)$.
\begin{enumerate}
\item[(i)]
 For each  $p\in[1,\infty)$, it holds
\begin{equation}\label{ine wasser}
W^p_p(\mu+\rho,\nu+\eta)
\leqslant W^p_p(\mu,\nu)+W^p_p(\rho,\eta).
\end{equation}
\item[(ii)]
For each   $p_1,~p_2\in[1,\infty)$ with $p_1\leqslant p_2$, it holds
\begin{equation}\label{ine wasser3}
\left\{\begin{array}{l}
W_{p_1}(\mu,\nu)\leqslant W_{p_2}(\mu,\nu),\\\noalign{\smallskip}
W_{p_2}(\mu,\nu)
\leqslant \mr{diam}(X)^{1-p_1/p_2}W_{p_1}^{p_1/p_2}(\mu,\nu),
\end{array}\right.
\end{equation}
where $X$ contains the supports of $\mu$ and $\nu$.
\end{enumerate}
\end{propi}


We now recall the definition of  the continuity equation and the associated notion of weak solutions:
\begin{definition}\label{def:weak}
Let $T>0$ and $\mu^0$ be a measure in $\mb{R}^d$. 
We said that a pair  $(\mu,w)$ composed with a measure $\mu$ 
in $\mb{R}^d\times [0,T]$ and a vector field $w:\mb{R}^d\times\mb{R}^+\rightarrow \mb{R}^d$ satisfying 
\vspace*{-2mm}\begin{equation*}
\displaystyle\int_{0}^T\int_{\mb{R}^d}|w(t)|~d\mu(t)dt<\infty
\vspace*{-2mm}\end{equation*}
is a \textbf{weak solution} to the system, called the \textbf{continuity equation},
\begin{equation}\label{eq:transport sec 2}
	\left\{
	\begin{array}{ll}
\partial_t\mu +\nabla\cdot(w\mu)=0&\mbox{ in }\mb{R}^d\times[0,T],\\\noalign{\smallskip}
\mu(0)=\mu^0&\mbox{ in }\mb{R}^d,
	\end{array}
	\right.
\end{equation}
if for every continuous bounded function $\xi:\mb{R}^d\rightarrow\mb{R}$, the function 
$t\mapsto \int_{\mb{R}^d}\xi~d\mu(t)$ is absolutely continuous with respect to $t$
and for all $\psi\in\mc{C}^{\infty}_c(\mb{R}^d)$, it holds
\vspace*{-2mm}\begin{equation*}
\dfrac{d}{dt}\displaystyle\int_{\mb{R}^d}\psi ~d\mu(t)
=\displaystyle\int_{\mb{R}^d}\langle\nabla \psi,w(t)\rangle~ d\mu(t)
\vspace*{-2mm}\end{equation*}
 for \textit{a.e.} $t$ and $\mu(0)=\mu^0$.
\end{definition}
Note that $t\mapsto \mu(t)$ is continuous for the weak convergence, it then make sense to impose the 
initial condition $\mu(0)=\mu^0$ pointwisely in time. 
Before stating a result of existence and uniqueness of solutions for the continuity
equation,  we first recall the definition of the flow associated to a vector field.
\begin{definition}
Let $w:\mb{R}^d\times\mb{R}^+\rightarrow\mb{R}^d$ be a vector field being uniformly bounded, Lipschitz in space and  measurable in time.
We define the \textbf{flow} associated to the vector field $w$ 
as the application $(x^0,t)\mapsto\Phi_t^w(x^0)$ such that, for all $x^0\in\mb{R}^d$, 
$t\mapsto\Phi_t^w(x^0)$ is the solution to the Cauchy problem
\vspace*{-2mm}\begin{equation*}
\left\{\begin{array}{l}
\dot x(t) =w(x(t),t)\mbox{ for a.e. }t\geqslant 0,\\\noalign{\smallskip}
x(0)=x^0.
\end{array}\right.
\vspace*{-2mm}\end{equation*}
\end{definition}
The following property of the flow will be useful all along the present paper:
\begin{propi}[see \cite{PR13}]
Let $\mu,~\nu\in\mc{P}_c(\mb{R}^d)$
and $w:\mb{R}^d\times\mb{R}\rightarrow\mb{R}^d$ be a vector field uniformly bounded, 
Lipschitz in space and measurable in time 
with a Lipschitz constant equal to $L$.
 For each $t\in\mb{R}$
and  $p\in[1,\infty)$, it holds
\begin{equation}\label{ine wasser 2}
W_p(\Phi_t^w\#\mu,\Phi_t^w\#\nu)
\leqslant e^{\frac{(p+1)}{p}L|t|} W_p(\mu,\nu).
\end{equation}
\end{propi}

 We now recall a standard result for the continuity equation: 
 \begin{theorem}[see {\cite[Th. 5.34]{V03}}]
Let $T>0$,  $\mu^0\in \mc{P}_c(\mb{R}^d)$ and  $w$ a vector field uniformly bounded, Lipschitz in space and measurable in time.
Then, System \eqref{eq:transport sec 2}
admits a unique solution $\mu$ in $\mc{C}^0([0,T];\mc{P}_c(\mb{R}^d))$, 
where $\mc{P}_c(\mb{R}^d)$ is equipped with the weak topology. Moreover:
\begin{enumerate}
\item[(i)]If  $\mu^0\in \mc{P}_c^{ac}(\mb{R}^d)$, then the solution $\mu$ to  \eqref{eq:transport sec 2} 
belongs to $\mc{C}^0([0,T];\mc{P}_c^{ac}(\mb{R}^d))$.
\item[(ii)] We have $\mu(t)=\Phi_t^{w}\#\mu^0$ for all $t\in [0,T]$.
\end{enumerate}
 \end{theorem}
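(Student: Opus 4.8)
The plan is to prove the theorem in two steps, following the classical characteristics approach. The first step is existence and uniqueness of a solution in $\mc{C}^0([0,T];\mc{P}_c(\mb{R}^d))$, and the second step is the two additional properties (i) and (ii). Since the vector field $w$ is uniformly bounded, Lipschitz in space and measurable in time, the Cauchy problem $\dot x(t) = w(x(t),t)$, $x(0)=x^0$ satisfies the hypotheses of the Carathéodory existence/uniqueness theorem, so the flow $\Phi_t^w$ is well-defined for every $x^0\in\mb{R}^d$ and every $t\in[0,T]$, it is injective (indeed a homeomorphism, with inverse the backward flow), and $x^0\mapsto\Phi_t^w(x^0)$ is Borel (in fact Lipschitz, with constant $e^{L t}$ by Grönwall). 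Moreover, since $|w|\le \|w\|_\infty=:M$, one has $|\Phi_t^w(x^0)-x^0|\le Mt$, so $\Phi_t^w$ maps $\supp(\mu^0)$ into a fixed compact set uniformly in $t\in[0,T]$.

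First I would establish property (ii) as the definition of the candidate solution: set $\mu(t):=\Phi_t^w\#\mu^0$. I then check that this $(\mu,w)$ is a weak solution in the sense of Definition~\ref{def:weak}. For $\psi\in\mc{C}^\infty_c(\mb{R}^d)$, by the change-of-variables formula for push-forward, $\int_{\mb{R}^d}\psi\,d\mu(t)=\int_{\mb{R}^d}\psi(\Phi_t^w(x^0))\,d\mu^0(x^0)$. Differentiating under the integral sign (justified by dominated convergence, using that $\Phi^w$ is Lipschitz in $t$ with constant $M$, that $\nabla\psi$ is bounded, and that $t\mapsto w(\Phi_t^w(x^0),t)$ is measurable and bounded), for a.e. $t$ one gets $\frac{d}{dt}\int\psi\,d\mu(t)=\int\langle\nabla\psi(\Phi_t^w(x^0)),w(\Phi_t^w(x^0),t)\rangle\,d\mu^0(x^0)=\int\langle\nabla\psi,w(t)\rangle\,d\mu(t)$, which is the required identity. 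Absolute continuity in $t$ of $t\mapsto\int\xi\,d\mu(t)$ for bounded continuous $\xi$ follows similarly (first for Lipschitz $\xi$ directly from the Lipschitz-in-$t$ bound on the characteristics, then for general bounded continuous $\xi$ by noting $\mu(t)$ stays supported in a fixed compact set and approximating uniformly on that set), and weak continuity in $t$ — hence the pointwise meaning of $\mu(0)=\mu^0$ — is immediate since $\Phi_0^w=\mathrm{id}$ and $t\mapsto\Phi_t^w(x^0)$ is continuous. This gives existence; property~(i) is then immediate, because if $\mu^0\in\mc{P}_c^{ac}(\mb{R}^d)$ and $\Phi_t^w$ is a bi-Lipschitz homeomorphism, then $\Phi_t^w\#\mu^0$ is again absolutely continuous (Lipschitz maps send Lebesgue-null sets to Lebesgue-null sets, so $(\Phi_t^w)^{-1}$ does too, hence $\mu(t)(E)=\mu^0((\Phi_t^w)^{-1}(E))=0$ whenever $|E|=0$), and it has compact support by the uniform bound above; continuity in $\mc{C}^0([0,T];\mc{P}_c^{ac})$ is just the weak continuity already noted.

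The remaining and main part is uniqueness. Here I would let $\mu$ be any weak solution in $\mc{C}^0([0,T];\mc{P}_c(\mb{R}^d))$ and show $\mu(t)=\Phi_t^w\#\mu^0$. The standard argument is to test the equation against $\psi(x)=\varphi(\Phi_{t}^{w}{}^{-1}\text{-transported test functions})$ — more precisely, to fix $s\in[0,T]$ and $\varphi\in\mc{C}^\infty_c$, and consider the function $t\mapsto \int_{\mb{R}^d}\varphi(\Phi_s^w\circ(\Phi_t^w)^{-1}(x))\,d\mu(t)(x)$, showing it is constant in $t$ by differentiating and using the weak formulation, so that its values at $t=0$ and $t=s$ coincide, yielding $\int\varphi\,d\mu(s)=\int\varphi\circ\Phi_s^w\,d\mu^0=\int\varphi\,d(\Phi_s^w\#\mu^0)$ for all $\varphi$, hence $\mu(s)=\Phi_s^w\#\mu^0$. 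I expect the main obstacle to be the regularity needed to carry out this duality rigorously with only a measurable-in-time, Lipschitz-in-space $w$: the transported test function $x\mapsto\varphi(\Phi_s^w\circ(\Phi_t^w)^{-1}(x))$ is only Lipschitz in $x$, not $\mc{C}^\infty$, so Definition~\ref{def:weak} does not apply to it directly and one must either approximate it by smooth functions (controlling errors using that $\mu(t)$ is supported in a fixed compact set and has unit mass) or invoke the superposition principle / known uniqueness results for the continuity equation with Lipschitz fields. In the present setting, since all of this is completely standard for Lipschitz (in space) velocity fields, I would simply cite \cite[Th.~5.34]{V03} or \cite{AGS08} for the uniqueness statement and confine the written proof to verifying that $\Phi_t^w\#\mu^0$ has the claimed properties (i)–(ii).
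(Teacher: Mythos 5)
The paper does not prove this statement: it is recalled as a classical result with a citation to \cite[Th.~5.34]{V03}, so there is no proof in the paper to compare against. Your outline — defining $\mu(t):=\Phi_t^w\#\mu^0$, verifying the weak formulation by differentiating $t\mapsto\int\psi\circ\Phi_t^w\,d\mu^0$, deducing (i) from the bi-Lipschitz character of the flow, and correctly identifying the duality/regularity issue in uniqueness before deferring that part to the same reference — is a correct rendition of the standard characteristics argument and is consistent with the paper's treatment.
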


We now recall the precise notions of approximate controllability and exact controllability for System \eqref{eq:transport}:
\begin{definition}\label{def:approx}
We say that:
\begin{enumerate}
\item[$\bullet$] System \eqref{eq:transport} is 
{\bf approximately controllable}
from $\mu^0$ to $\mu^1$ on the time interval $[0,T]$ if 
for each $\varepsilon>0$ 
there exists a control $\mathds{1}_{\omega}u$  
such that the corresponding 
solutions $\mu$ to System \eqref{eq:transport} satisfies
\begin{equation}\label{estim Wp approx bis}
W_p(\mu^1,\mu(T))\leqslant \varepsilon.
\end{equation}
\item[$\bullet$] System \eqref{eq:transport} is  {\bf exactly controllable}
from $\mu^0$ to $\mu^1$ on the time interval $[0,T]$ if there exists a control $\mathds{1}_{\omega}u$ 
such that the corresponding solution  to System \eqref{eq:transport} is equal to $\mu^1$ at time $T$.
\end{enumerate}
\end{definition}

It is  interesting to remark that, by using  properties \eqref{ine wasser3} of the Wasserstein distance, estimate \eqref{estim Wp approx bis} 
can be replaced by:
\begin{equation*}
W_1(\mu^1,\mu(T))\leqslant \varepsilon.
\end{equation*}
Thus, in this work, we study  approximate controllability by considering the distance $W_1$ only.

\section{Approximate controllability with a localized Lipschitz control}\label{sec:cont approx}

In this section,  we study approximate controllability of System \eqref{eq:transport}
 with localized Lipschitz controls. 
More precisely, 
in  Sections 
 \ref{sec:approx dimn},
we consider the case where the open connected control subset
 $\omega$ contains the support of both $\mu^0$ and $\mu^1$.
We then prove Theorem \ref{th cont approx} in Section \ref{sec:approx loc}.

\subsection{Approximate controllability  with a Lipschitz control}\label{sec:approx dimn}

In this section, we prove  approximate controllability of System \eqref{eq:transport} with 
a Lipschitz control, when the open connected control subset $\omega$ contains the support of both $\mu^0$ and $\mu^1$.
 Without loss of generality, we can assume that the vector field $v$ is identically zero by replacing $u$ with $u-v$ in the control set $\omega$. 
We then study  approximate controllability of system 
\begin{equation}\label{eq:transport dim2}
	\left\{
	\begin{array}{ll}
\partial_t\mu +\Div(u\mu)=0&\mbox{ in }\mb{R}^d\times\mb{R}^+,\\\noalign{\smallskip}
\mu(0)=\mu^0&\mbox{ in }\mb{R}^d.
	\end{array}
	\right.
\end{equation}

\begin{prop}\label{prop dim=d}
Let $\mu^0,\mu^1\in\mc{P}_c^{ac}(\mb{R}^d)$ 
compactly supported in $\omega$.
Then, for  all  $T>0$, 
System \eqref{eq:transport dim2} is approximately controllable on the time interval $[0,T]$
from $\mu^0$ to $\mu^1$
with a control $u:\mb{R}^d\times\mb{R}^+\rightarrow\mb{R}^d$ uniformly bounded, Lipschitz in space and  measurable in time.
Moreover, the solution $\mu$ to System \eqref{eq:transport dim2} satisfies
 $$\supp(\mu(t))\subset \omega,$$ for all $t\in [0,T]$.
\end{prop}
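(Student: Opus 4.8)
The plan is to reduce approximate controllability to a transport problem that can be solved explicitly, and then to regularize. Since $\mu^0,\mu^1 \in \mc{P}_c^{ac}(\mb{R}^d)$ are both supported in the open connected set $\omega$, property \eqref{def:Wp} provides a Borel map $\gamma$ with $\gamma\#\mu^0 = \mu^1$; moreover, since $\omega$ is open and connected, we may choose (or perturb) $\gamma$ so that, for $\mu^0$-a.e.\ $x$, the segment joining $x$ to $\gamma(x)$ stays inside $\omega$ — or, more robustly, we will not transport along straight segments at all, but rather construct a flow that stays in $\omega$. The first step is therefore to produce, for each $\varepsilon>0$, a time-dependent vector field $\widetilde u$, uniformly bounded and Lipschitz in space, measurable in time, supported (in $x$) inside $\omega$ for all $t$, whose flow $\Phi^{\widetilde u}_t$ satisfies $W_1(\Phi^{\widetilde u}_T\#\mu^0, \mu^1)\leqslant\varepsilon$ and keeps $\supp(\mu(t))\subset\omega$ throughout.

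The key steps, in order. First I would approximate both $\mu^0$ and $\mu^1$ in $W_1$ by finitely-supported-block measures: partition $\omega$ (up to a small-mass remainder) into finitely many small cubes and replace each measure by a sum of uniform densities on small sub-cubes with matching masses, so that after relabelling it suffices to move a finite union of small "parcels" of mass to prescribed target parcels, all inside $\omega$. Second, because $\omega$ is connected and open, I can join the center of each source parcel to the center of its target parcel by a smooth path compactly contained in $\omega$, with pairwise-disjoint tubular neighborhoods chosen thin enough (shrinking the parcels if necessary, which only costs $W_1$ error). Third, along each tube I design a smooth-in-space, piecewise-in-time velocity field that rigidly translates the parcel along the path in time $T$; summing these fields (they have disjoint supports) and multiplying by a cutoff supported in $\omega$ gives a vector field $u$ that is Lipschitz in space, bounded (bound $\sim \mathrm{length}/T$), measurable in time, with $\supp_x u(t)\Subset\omega$. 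By Theorem on existence/uniqueness (item (ii)), the solution is $\mu(t)=\Phi^u_t\#\mu^0$, it stays AC, and since each tube lies in $\omega$ and the cutoff keeps trajectories from leaving, $\supp(\mu(t))\subset\omega$ for all $t$. Fourth, I estimate $W_1(\mu(T),\mu^1)$: it is bounded by the sum of (a) the two approximation errors $\mu^0\!\approx\!\mu^0_{\mathrm{appx}}$, $\mu^1\!\approx\!\mu^1_{\mathrm{appx}}$, controlled via \eqref{ine wasser} and the stability estimate \eqref{ine wasser 2} applied to $\Phi^u_T$, and (b) the transport error, which is the sum over parcels of (parcel mass)$\times$(diameter of parcel), again small. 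Choosing all parameters appropriately gives the $\varepsilon$ bound.

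The main obstacle is the geometric construction of step two–three: routing finitely many disjoint tubes through a general open connected $\omega$ while keeping them compactly inside $\omega$ and thin enough to be disjoint, and simultaneously controlling the Lipschitz constant and sup-norm of the resulting velocity field. Connectedness gives the paths, and openness gives room for tubes, but if $\omega$ is "thin" in places the tubes may need to be very narrow, forcing finer parcels and a larger Lipschitz constant; since $T$ is fixed, the bound $\|u\|_\infty \lesssim (\text{path length})/T$ is harmless, but one must be careful that shrinking parcels to fit disjoint tubes does not blow up the Lipschitz constant beyond a finite value for each fixed $\varepsilon$ (it does not: for fixed $\varepsilon$ only finitely many parcels are used, and each transition can be carried out in a fixed fraction of $[0,T]$ with smooth spatial profiles, so $\mathrm{Lip}(u(t))$ is finite, though not uniform in $\varepsilon$ — which is fine). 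A secondary technical point is handling the small-mass remainder left outside the finitely many cubes: since it has arbitrarily small mass and $\omega$ contains its support, it can be absorbed into the $W_1$ error using the total-mass scaling of $W_p$ and the fact that all measures live in a fixed compact set, so $W_1$ of a piece is bounded by its mass times the diameter.
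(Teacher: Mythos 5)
Your overall strategy (discretize both measures into finitely many parcels of matching mass, transport each source parcel to its target parcel by a Lipschitz flow supported in $\omega$, and control the $W_1$ error by mass times diameter) is the same in spirit as the paper's, but the central construction is genuinely different, and the step on which everything rests is exactly the one you leave unproven. The paper does not route parcels along arbitrary disjoint tubes: it builds a coordinate-wise quantile partition into cells $A_{ij}$, $B_{ij}$ of equal mass, matches them by the \emph{same} index (an order-preserving matching in each coordinate), shrinks them to $\widetilde A_{ij}\subset\subset A_{ij}$, $\widetilde B_{ij}\subset\subset B_{ij}$, and moves $\widetilde A_{ij}$ to $\widetilde B_{ij}$ by linear interpolation of the box faces. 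Monotonicity of the quantiles guarantees that the interpolated boxes $\widetilde C_{ij}(t)$ stay pairwise disjoint for all $t$, the velocity is affine on each box, and a smooth extension in the gaps gives a Lipschitz field; the shrinking is needed precisely to avoid the discontinuity of Remark \ref{rmq mesh}. Your construction instead takes an arbitrary pairing of parcels and asserts that their centers can be joined by smooth paths with \emph{pairwise disjoint} tubular neighborhoods compactly contained in $\omega$. For $d\geqslant 3$ this follows from a general-position argument, but for $d=2$ the existence of pairwise disjoint arcs realizing an arbitrary matching of $2N$ distinct interior points of a connected open set is a nontrivial topological fact (it requires, for instance, that a simple arc does not disconnect a connected open planar set, plus an induction on the parcels); as written it is an assertion, not a proof, and it is the whole difficulty of the Lipschitz case. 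If you adopt an order-preserving quantile matching as the paper does, the routing problem disappears and straight-line interpolation suffices. Note also that matching the masses of source and target parcels is not automatic for a fixed cube grid; it is exactly what the quantile construction of the points $a_i, a_{ij}, b_i, b_{ij}$ provides.

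A second, more quantitative gap: you propose to push the approximation error $\mu^0\approx\mu^0_{\mathrm{appx}}$ through the flow via the stability estimate \eqref{ine wasser 2}. That estimate carries a factor $e^{2LT}$ with $L$ the Lipschitz constant of your field, and $L$ blows up as the parcels and tubes are refined (you acknowledge $L$ is not uniform in $\varepsilon$), so the error terms controlled this way need not tend to zero. The paper avoids this entirely: the untransported remainder is simply left alone, and since all the mass stays in a fixed bounded set its contribution to $W_1$ is bounded by its mass times the diameter, as in \eqref{ine norm 3}; each transported sub-parcel lands exactly in its target box, so its mismatch with $\mu^1$ restricted there is bounded by its mass times the diameter of that box, as in \eqref{ine norm 2}. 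Both error terms should be estimated this way, with no recourse to \eqref{ine wasser 2}.
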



\begin{proof}[Proof of Proposition \ref{prop dim=d}] We assume that  $d:=2$, but
the reader will see that the proof can be clearly adapted to dimension one or to 
any other space dimension.
In view to simplify the computations, we suppose that $T:=1$ and $\supp(\mu^i)\subset(0,1)^2\subset\subset\omega$ for $i=1,2$. 
 We first partition $(0,1)^2$. 
Let $n\in\mb{N}^*$, consider $a_0:=0$, $b_0:=0$ 
and define the points $a_i,b_i$ for all $i\in\{1,...,n\}$ 
by induction as follows: suppose that for a given $i\in\{0,...,n-1\}$  the points $a_i$ and $b_i$
are defined, then the points  $a_{i+1}$ and $b_{i+1}$ are the smallest values such that 
\begin{equation*}
\displaystyle\int_{(a_i,a_{i+1})\times\mb{R}}d\mu^0 =\frac{1}{n}
\mbox{ ~~~and~~~ }\displaystyle\int_{(b_i,b_{i+1})\times\mb{R}}d\mu^1 =\frac{1}{n}.
\end{equation*}
Again, for each $i\in\{0,...,n-1\}$, we consider $a_{i,0}:=0$, $b_{i,0}:=0$
and supposing that for a given $j\in\{0,...,n-1\}$  
the points $a_{i,j}$ and $b_{i,j}$ are already defined, 
$a_{i,j+1}$ and $b_{i,j+1}$ are the smallest values such that
\begin{equation*}
\displaystyle\int_{A_{ij}}d\mu^0 =\frac{1}{n^2}
\mbox{ ~~~and~~~ }
\displaystyle\int_{B_{ij}}d\mu^1 =\frac{1}{n^2},
\end{equation*}
where $A_{ij}:=(a_i,a_{i+1})\times(a_{ij},a_{i(j+1)})$ 
and $B_{ij}:=(b_i,b_{i+1})\times(b_{ij},b_{i(j+1)})$.
Since $\mu^0$ and $\mu^1$ have a mass equal to $1$ and are supported in $(0,1)^2$, then 
$a_n,b_n\leqslant1$ and 
$a_{i,n},~b_{i,n}\leqslant1$ for all $i\in \{0,...,n-1\}$.
We give in Figure \ref{fig: mesh} an example of such partition.

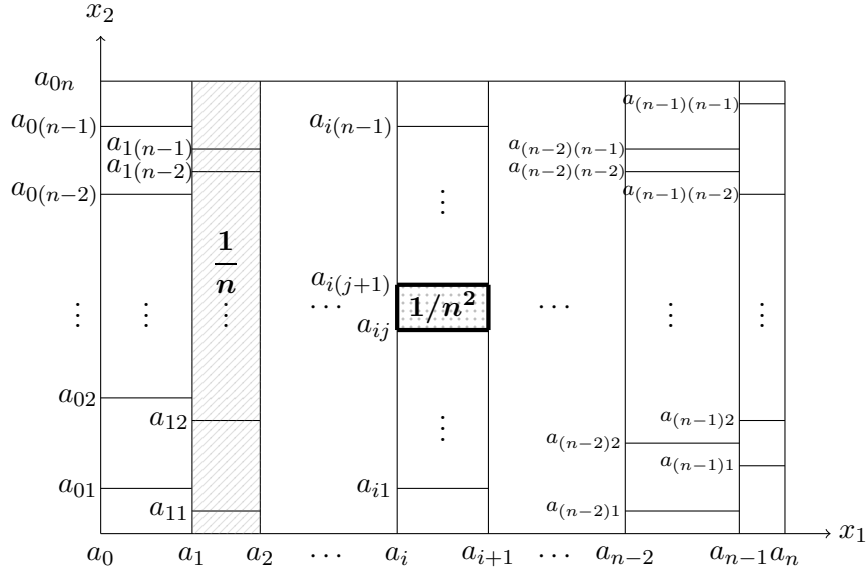
\begin{figure}[ht]
\begin{center}
\begin{tikzpicture}[scale=3]
\draw[->] (0,0) -- (0,2.2);
\draw[->] (0,0) -- (3.2,0);
\draw[-] (0,2) -- (3,2);
\draw[-] (3,2) -- (3,0);
\path (0,2.3) node {$x_2$};
\path (3.3,0) node {$x_1$};
\path (0,-0.1) node {$a_0$};
\draw[-] (0.4,0) -- (0.40,2);
\path (0.4,-0.1) node {$a_1$};
\draw[-] (0,0.2) -- (0.4,0.2);
\path (-0.1,0.2) node {$a_{01}$};
\draw[-] (0,0.6) -- (0.4,0.6);
\path (-0.1,0.6) node {$a_{02}$};
\path (-0.1,1) node {$\vdots$};
\path (0.2,1) node {$\vdots$};
\draw[-] (0,1.5) -- (0.4,1.5);
\path (-0.2,1.5) node {$a_{0(n-2)}$};
\draw[-] (0,1.8) -- (0.4,1.8);
\path (-0.2,1.8) node {$a_{0(n-1)}$};
\path (-0.2,2) node {$a_{0n}$};
\draw[-] (0.7,0) -- (0.7,2);
\path (0.7,-0.1) node {$a_2$};
\draw[-] (0.4,0.1) -- (0.7,0.1);
\path (0.3,0.1) node {$a_{11}$};
\draw[-] (0.4,0.5) -- (0.7,0.5);
\path (0.3,0.5) node {$a_{12}$};
\path (0.55,1) node {$\vdots$};
\draw[-] (0.4,1.6) -- (0.7,1.6);
\path (0.22,1.6) node {$a_{1(n-2)}$};
\draw[-] (0.4,1.7) -- (0.7,1.7);
\path (0.22,1.7) node {$a_{1(n-1)}$};
\fill [opacity=0.5,pattern=north east lines, pattern color = gray] (0.4,0) -- (0.4,2) -- (0.7,2) -- (0.7,0);
\path (0.55,1.2) node {$\boldsymbol{\dfrac{1}{n}}$};

\path (1,-0.1) node {$\cdots$};
\path (1,1) node {$\cdots$};
\draw[-] (1.3,0) -- (1.3,2);
\path (1.3,-0.1) node {$a_i$};

\draw[-] (1.3,0.2) -- (1.7,0.2);
\path (1.2,0.2) node {$a_{i1}$};

\path (1.5,0.5) node {$\vdots$};

\draw[-,ultra thick] (1.3,0.9) -- (1.7,0.9);
\path (1.2,0.9) node {$a_{ij}$};
\draw[-,ultra thick] (1.3,1.1) -- (1.7,1.1);
\path (1.1,1.1) node {$a_{i(j+1)}$};
\path (1.5,1.5) node {$\vdots$};
\draw[-,ultra thick] (1.3,0.9) -- (1.3,1.1);
\draw[-,ultra thick] (1.7,0.9) -- (1.7,1.1);

\fill [opacity=0.5,pattern=dots] (1.3,0.9) -- (1.3,1.1) -- (1.7,1.1) -- (1.7,0.9);
\path (1.5,1.0) node {$\boldsymbol{1/n^2}$};

\draw[-] (1.3,1.8) -- (1.7,1.8);
\path (1.1,1.8) node {$a_{i(n-1)}$};

\draw[-] (1.7,0) -- (1.7,0.9);
\draw[-] (1.7,1.1) -- (1.7,2);
\path (1.7,-0.1) node {$a_{i+1}$};

\path (2,-0.1) node {$\cdots$};
\path (2,1) node {$\cdots$};

\draw[-] (2.3,0) -- (2.3,2);
\path (2.3,-0.1) node {$a_{n-2}$};

\draw[-] (2.3,0.1) -- (2.8,0.1);
\path (2.12,0.1) node {\scriptsize{$a_{(n-2)1}$}};
\draw[-] (2.3,0.4) -- (2.8,0.4);
\path (2.12,0.4) node {\scriptsize{$a_{(n-2)2}$}};
\path (2.5,1) node {$\vdots$};
\draw[-] (2.3,1.6) -- (2.8,1.6);
\path (2.05,1.6) node {\scriptsize{$a_{(n-2)(n-2)}$}};
\draw[-] (2.3,1.7) -- (2.8,1.7);
\path (2.05,1.7) node {\scriptsize{$a_{(n-2)(n-1)}$}};

\draw[-] (2.8,0) -- (2.8,2);
\path (2.8,-0.1) node {$a_{n-1}$};

\draw[-] (2.8,0.3) -- (3,0.3);
\path (2.62,0.3) node {\scriptsize{$a_{(n-1)1}$}};
\draw[-] (2.8,0.5) -- (3,0.5);
\path (2.62,0.5) node {\scriptsize{$a_{ (n-1)2}$}};
\path (2.9,1) node {$\vdots$};
\draw[-] (2.8,1.5) -- (3,1.5);
\path (2.55,1.5) node {\scriptsize{$a_{(n-1)(n-2)}$}};
\draw[-] (2.8,1.9) -- (3,1.9);
\path (2.55,1.9) node {\scriptsize{$a_{(n-1)(n-1)}$}};
\path (3,-0.1) node {$a_n$};

\end{tikzpicture}
\caption{Example of a partition for $\mu^0$.}
\label{fig: mesh}
\end{center}
\end{figure}


If one aims to define a vector field sending each $A_{ij}$ to $B_{ij}$, then some shear stress 
is naturally introduced, as described in  Remark \ref{rmq mesh}.
To overcome this problem, we first define sets $\widetilde{A}_{ij}\subset\subset A_{ij}$  
and  $\widetilde{B}_{ij}\subset\subset B_{ij}$
for all $i,j\in\{0,...,n-1\}$. We then send  the mass of $\mu^0$ from each $\widetilde{A}_{ij}$ to $\widetilde{B}_{ij}$, while we do not control the mass contained in $A_{ij}\backslash\widetilde{A}_{ij}$.
More precisely, for all $i,j\in\{0,...,n-1\}$, we define, as in Figure  \ref{fig:cell}, 
$a_i^-,~a_i^+,a_{ij}^-,~a_{ij}^+$
the smallest values such that
\begin{equation*}
\displaystyle\int_{(a_i,a_{i}^-)\times(a_{ij},a_{i(j+1)})}d\mu^0 =
\displaystyle\int_{(a_i^+,a_{i+1})\times(a_{ij},a_{i(j+1)})}d\mu^0 =\frac{1}{n^3}
\end{equation*}
and
\begin{equation*}
\displaystyle\int_{(a_i^-,a_{i}^+)\times(a_{ij},a_{ij}^-)}d\mu^0 =
\displaystyle\int_{(a_i^-,a_{i}^+)\times(a_{ij}^+,a_{i(j+1)})}d\mu^0 
=\dfrac{1}{n}\times\left(\dfrac{1}{n^2}-\dfrac{2}{n^3}\right).
\end{equation*}
\vspace*{-2mm}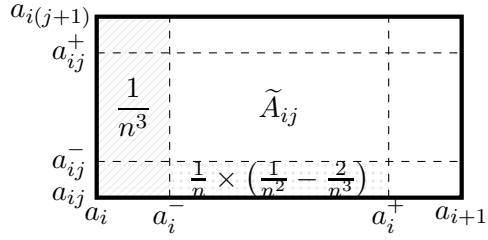
\begin{figure}[ht]
\begin{center}
\begin{tikzpicture}[scale=2.4]
\draw[color=black,ultra thick] (0 ,0) -- (0 ,1) -- (2,1) -- (2 ,0) -- cycle;
\draw[dashed] (0.4,0) -- (0.40,1);
\draw[dashed] (1.6,0) -- (1.6,1);
\draw[dashed] (0,0.2) -- (2,0.2);
\draw[dashed] (0,0.8) -- (2,0.8);
\fill [opacity=0.3,pattern=north east lines] (0,0) -- (0.4,0) -- (0.4,1) -- (0,1);
\fill [opacity=0.3,pattern=dots] (0.4,0) -- (1.6,0) -- (1.6,0.2) -- (0.4,0.2);
\path (1,0.1) node {$\frac{1}{n}\times\left(\frac{1}{n^2}-\frac{2}{n^3}\right)$};
\path (0.2,0.5) node {$\dfrac{1}{n^3}$};
\path (0,-0.1) node {$a_i$};
\path (0.4,-0.1) node {$a_i^-$};
\path (1.6,-0.1) node {$a_i^+$};
\path (2,-0.1) node {$a_{i+1}$};
\path (-0.15,0) node {$a_{ij}$};
\path (-0.15,0.2) node {$a_{ij}^-$};
\path (-0.15,0.8) node {$a_{ij}^+$};
\path (-0.25,1) node {$a_{i(j+1)}$};
\path (1,0.5) node {$\widetilde{A}_{ij}$};
\end{tikzpicture}\vspace*{-4mm}
\caption{Example of  cell.}\label{fig:cell}
\end{center}\end{figure}

\noindent We similarly define $b_i^+,~b_i^-,~b_{ij}^+,~b_{ij}^-$ and finally define
$$\widetilde{A}_{ij}:=(a_i^-,a_i^+)\times(a_{ij}^-,a_{ij}^+)
\mbox{ and }
\widetilde{B}_{ij}:=(b_i^-,b_i^+)\times(b_{ij}^-,b_{ij}^+).$$

The goal is to build a solution to System    \eqref{eq:transport dim2}
such that the corresponding flow $\Phi_t^u$ satisfies  
\vspace*{-2mm}\begin{equation}\label{phi(Aij) in Bij}
\Phi_T^u(\widetilde{A}_{ij})=\widetilde{B}_{ij},
\end{equation}
for all $i,j\in\{0,...,n-1\}$. 
We observe that we do not take into account the displacement of the mass contained in 
$A_{ij}\backslash \widetilde{A}_{ij}$.
We will show  that the mass of  the corresponding term tends to zero when $n$ goes to infinity.
The rest of the proof is divided into two steps. 
In a first step, we build a flow satisfying \eqref{phi(Aij) in Bij}, then the corresponding vector field.
In a second step, we compute the Wasserstein distance between $\mu^1$ and $\mu(T)$,
showing that it converges to zero when $n$ goes to infinity.

 {\bf Step 1:} 
 We first build a flow satisfying \eqref{phi(Aij) in Bij}.
We recall that $T:=1$.
For each $i\in\{0,...,n-1\}$, we denote by $c^-_i$ and $c^+_i$
  the linear functions equal to $a_i^-$ and $a_i^+$ at time $t=0$ and equal to $b_i^-$
  and $b_i^+$ at time $t=T=1$, respectively,
\textit{i.e.} the functions defined for all $t\in[0,T]$ by:
\begin{equation*}
c^-_i(t)=(b_i^--a_i^-)t+a_i^-
\mbox{~~ and~ ~}
c^+_i(t)=(b_i^+-a_i^+)t+a_i^+.
\end{equation*}
Similarly, for all $i,j\in\{0,...,n-1\}$,  we denote by $c^-_{ij}$  and $c^+_{ij}$
the linear functions equal to $a_{ij}^-$ and $a_{ij}^+$ at time $t=0$ and equal to 
$b_{ij}^-$ and $b_{ij}^+$ at time $t=T=1$, respectively, \textit{i.e.} the functions defined for all $t\in[0,T]$ by:
\begin{equation*}
c^-_{ij}(t)=(b_{ij}^--a_{ij}^-)t+a_{ij}^-
\mbox{~~ and~ ~}
c^+_{ij}(t)=(b_{ij}^+-a_{ij}^+)t+a_{ij}^+.
\end{equation*}

Consider the application being the following  linear combination of $c_i^-,~c_i^+$ 
 and $c_{ij}^-,~c_{ij}^+$ on $\widetilde{A}_{ij}$, \textit{i.e.}
\begin{equation}\label{expr char 2d}
x(x^0,t):=
\left(\begin{array}{c}
x_1(x^0,t)\\
x_2(x^0,t)\end{array}\right)
=\left(\begin{array}{c}
\dfrac{a_i^+-x^0_1}{a_i^+-a_i^-}c^-_i(t)
+\dfrac{x^0_1-a_i^-}{a_i^+-a_i^-}c^+_i(t)\\\noalign{\smallskip}
\dfrac{a_{ij}^+-x^0_2}{a_{ij}^+-a_{ij}^-}c^-_{ij}(t)
+\dfrac{x^0_2-a_{ij}^-}{a_{ij}^+-a_{ij}^-}c^+_{ij}(t)
\end{array}\right),
\end{equation}
where $x^0=(x^0_1,x^0_2)\in \widetilde{A}_{ij}$.
Let us prove that an extension of the application $(x^0,t)\mapsto x(x^0,t)$ is a flow associated to a vector field $u$.
After some computations, we obtain
 \begin{equation*}
\left\{\begin{array}{ll}
\dfrac{dx_{1}}{dt}(x^0,t)=\alpha_{i}(t)x_{1}(x^0,t)+\beta_i(t)&~\forall t\in[0,T],\\\noalign{\smallskip}
\dfrac{dx_{2}}{dt}(x^0,t)=\alpha_{ij}(t)x_{2}(x^0,t)+\beta_{ij}(t)&~\forall t\in[0,T],
\end{array}\right.
\end{equation*}
 where for all $t\in[0,T]$,
  \begin{equation*}
\left\{\begin{array}{l}
\alpha_i(t)=\dfrac{b_i^+-b_i^-+a_i^--a_i^+}{c^+_{i}(t)-c^-_{i}(t)},
~~\beta_i(t)=\dfrac{a_i^+b_{i}^--a_i^-b_i^+}{c_i^+(t)-c_i^-(t)},\\\noalign{\smallskip}
\alpha_{ij}(t)=\dfrac{b_{ij}^+-b_{ij}^-+a_{ij}^--a_{ij}^+}{c^+_{ij}(t)-c^-_{ij}(t)},
~~\beta_{ij}(t)=\dfrac{a_{ij}^+b_{ij}^--a_{ij}^-b_{ij}^+}{c_{ij}^+(t)-c_{ij}^-(t)}.
\end{array}\right.
\end{equation*}
The last quantities are well defined since 
for all $i,j\in\{0,...,n-1\}$ and $t\in[0,T]$
 \begin{equation*}
\left\{\begin{array}{l}
|c^+_{i}(t)-c^-_{i}(t)|\geqslant \max\{|a_{i}^+-a_{i}^-|,|b_{i}^+-b_{i}^-|\},\\
|c_{ij}^+(t)-c_{ij}^-(t)|\geqslant \max\{|a_{ij}^+-a_{ij}^-|,|b_{ij}^+-b_{ij}^-|\}.
\end{array}\right.
\end{equation*}
For all $t\in [0,T]$, consider the set 
\begin{equation*}
\widetilde{C}_{ij}(t):=(c_i^-(t),c_i^+(t))\times (c_{ij}^-(t),c_{ij}^+(t)).
\end{equation*}
We remark that $\widetilde{C}_{ij}(0)=\widetilde{A}_{ij}$ and 
$\widetilde{C}_{ij}(T)=\widetilde{B}_{ij}$.
On $$\widetilde{C}_{ij}:=\{(x,t): t\in [0,T], x\in \widetilde{C}_{ij}(t)\},$$ we then define the vector field $u$ by
  \begin{equation*}
\left\{\begin{array}{l}
u_1(x,t)=\alpha_{i}(t)x_1+\beta_i(t),\\
u_2(x,t)=\alpha_{ij}(t)x_2+\beta_{ij}(t),
\end{array}\right.
\end{equation*}
 for all $(x,t)\in \widetilde{C}_{ij}$ ($x=(x_1,x_2)$). 
 We extend $u$ by a uniform bounded  $\mc{C}^{\infty}$ function outside  $\cup_{ij}\widetilde{C}_{ij}$,
 then $u$ is 
 a $\mc{C}^{\infty}$ function and it satisfies $\supp(u)\subset \omega$.
Then, System \eqref{eq:transport} admits an unique solution and the 
flow on $\widetilde{C}_{ij}$ is given by  \eqref{expr char 2d}.

 {\bf Step 2:} We now prove that the refinement of the grid provides convergence to the target $\mu^1$, \textit{i.e.}
\begin{equation*}
W_1(\mu^1,\mu(T))\underset{n\rightarrow\infty}{\longrightarrow}0.
\end{equation*}
We remark that 
 \begin{equation*}
 \int_{\widetilde{B}_{ij}}d\mu(T)
 =\int_{\widetilde{B}_{ij}}d\mu^1
 = \frac{1}{n^2}-\dfrac{2}{n^3}-\dfrac{2}{n}\left(\dfrac{1}{n^2} -\dfrac{2}{n^3}\right)
 =\dfrac{(n-2)^2}{n^4}.
 \end{equation*}
Hence, by defining 
$$ R:=(0,1)^2~\backslash~ \bigcup\limits_{ij}\widetilde{B}_{ij},$$
 we also have
 \begin{equation*}
 \int_{R}d\mu(T) =\int_{R}d\mu^1
 =1-\dfrac{(n-2)^2}{n^2}.
 \end{equation*}
 Using \eqref{ine wasser},
it holds
 \begin{equation}\label{ine norm 1}
\begin{array}{rcl}
W_1(\mu^1,\mu(T))
&\leqslant&\sum\limits_{i,j=1}^nW_1(\mu^1_{|\widetilde{B}_{ij}},\mu(T)_{|\widetilde{B}_{ij}})+
W_1(\mu^1_{|R},\mu(T)_{|R}).
\end{array}
\end{equation}
We now estimate each term in the right-hand side of  \eqref{ine norm 1}.
 Since we deal with AC measures, 
 using Properties \ref{prop Wp}, 
 there exist measurable maps  
  $\gamma_{ij}:\mb{R}^2\rightarrow\mb{R}^2$, for all $i,j\in\{0,...,n-1\}$, 
  and   $\overline{\gamma} :\mb{R}^2\rightarrow\mb{R}^2$ such that
 \begin{equation*}
\left\{\begin{array}{l}
 \gamma_{ij}\#(\mu^1_{|\widetilde{B}_{ij}})
  =\mu(T)_{|\widetilde{B}_{ij}},\\\noalign{\smallskip}
  W_1(\mu^1_{|\widetilde{B}_{ij}},\mu(T)_{|\widetilde{B}_{ij}})\\\noalign{\smallskip}\hspace*{1cm}
=\displaystyle\int_{\widetilde{B}_{ij}}|x-\gamma_{ij}(x)|d\mu^1(x)
\end{array}\right.
\qquad\mbox{and}\qquad
\left\{\begin{array}{l}
 \overline{\gamma}\#(\mu^1_{|R})
 =\mu(T)_{|R},\\\noalign{\smallskip}
  W_1(\mu^1_{|R},\mu(T)_{|R})\\\noalign{\smallskip}\hspace*{1cm}
=\displaystyle\int_{R}|x-\overline{\gamma}(x)|d\mu^1(x).
 \end{array}\right.
 \end{equation*}
In the first term in the right hand side of  \eqref{ine norm 1}, observe that $\gamma_{ij}$ moves masses inside $\widetilde{B}_{ij}$ only.
Thus, for all $i,j\in\{0,..., n-1\}$,
 using the triangle inequality,
 \begin{equation}\label{ine norm 2}
\begin{array}{c}
W_1(\mu^1_{|\widetilde{B}_{ij}},\mu(T)_{|\widetilde{B}_{ij}})
= \displaystyle\int_{\widetilde{B}_{ij}}|x-\gamma_{ij}(x)|d\mu^1(x)\\
\leqslant [(b_i^+-b_i^-)+(b_{ij}^+-b_{ij}^-)]
\displaystyle\int_{\widetilde{B}_{ij}}d\mu^1(x)
\leqslant (b_i^+-b_i^-+b_{ij}^+-b_{ij}^-)
\dfrac{(n-2)^2}{n^4}.
\end{array}
 \end{equation}
For the second term in the right-hand side of \eqref{ine norm 1}, 
observe that $\overline{\gamma}$ moves a small mass in the bounded set $(0,1)$.
Thus it holds
 \begin{equation}\label{ine norm 3}
 \begin{array}{rcl}
 W_1(\mu^1_{|R},\mu(T)_{|R})
&=&\displaystyle\int_{R}|x-\overline{\gamma}(x)|d\mu^1(x)
\leqslant 2\ \left(1-\dfrac{(n-2)^2}{n^2}\right)
=8\dfrac{n-1}{n^2}.
\end{array}
 \end{equation}
Combining \eqref{ine norm 1}, \eqref{ine norm 2} and \eqref{ine norm 3}, we obtain
\begin{equation*}
\begin{array}{rcl}
W_1(\mu^1,\mu(T))
&\leqslant&
\left(\sum\limits_{i,j=1}^n(b_i^+-b_i^-+b_{ij}^+-b_{ij}^-)\dfrac{(n-2)^2}{n^4}\right) +8\dfrac{n-1}{n^2}\\
&\leqslant& 2n\dfrac{(n-2)^2}{n^4}+ 8\dfrac{n-1}{n^2}
\underset{n\rightarrow\infty}{\longrightarrow}0.
\end{array}
\end{equation*}
$\left.\right.$
\end{proof}

\begin{rmq}\label{rmq mesh}
It is not possible in general to build a Lipschitz vector field sending directly 
each $A_{ij}$ to $B_{ij}$ using the strategy 
developed in the proof of Proposition \ref{prop dim=d}.
Indeed, we would obtain discontinuous velocities on the lines $c_i$.
Figure \ref{fig: shear stress} illustrates this phenomenon in the case $n=2$.

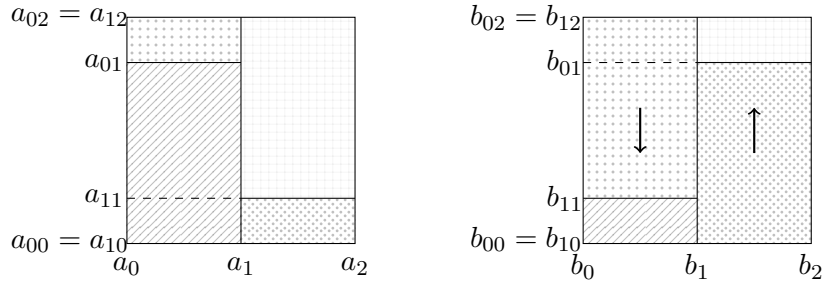
\begin{figure}[ht]
\begin{center}
\begin{tikzpicture}[scale=3]
\fill [opacity=0.5,pattern=north east lines] (0,0) -- (0.5,0) -- (0.5,0.8) -- (0,0.8);
\fill [opacity=0.5,pattern=dots] (0,0.8) -- (0.5,0.8) -- (0.5,1) -- (0,1);
\fill [opacity=0.5,pattern=crosshatch dots] (0.5,0) -- (1,0) -- (1,0.2) -- (0.5,0.2);
\fill [opacity=0.2,pattern=grid] (0.5,0.2) -- (1,0.2) -- (1,1) -- (0.5,1);

\fill [opacity=0.5,pattern=north east lines] (2,0) -- (2.5,0) -- (2.5,0.2) -- (2,0.2);
\fill [opacity=0.5,pattern=dots] (2,0.2) -- (2.5,0.2) -- (2.5,1) -- (2,1);
\fill [opacity=0.5,pattern=crosshatch dots] (2.5,0) -- (3,0) -- (3,0.8) -- (2.5,0.8);
\fill [opacity=0.2,pattern=grid] (2.5,0.8) -- (3,0.8) -- (3,1) -- (2.5,1);

\draw[color=black] (0 ,0) -- (0 ,1) -- (1,1) -- (1 ,0) -- cycle;
\draw[color=black] (2 ,0) -- (2 ,1) -- (3,1) -- (3 ,0) -- cycle;
\draw[-] (0.5,0) -- (0.5,1);
\draw[-] (0,0.8) -- (0.5,0.8);
\draw[-] (0.5,0.2) -- (1,0.2);
\draw[dashed] (0,0.2) -- (0.5,0.2);
\path (0,-0.1) node {$a_0$};
\path (0.5,-0.1) node {$a_1$};
\path (1,-0.1) node {$a_2$};
\path (-0.25,0) node {$a_{00}=a_{10}$};
\path (-0.1,0.2) node {$a_{11}$};
\path (-0.1,0.8) node {$a_{01}$};
\path (-0.25,1) node {$a_{02}=a_{12}$};

\draw[->,thick] (2.25,0.6) -- (2.25,0.4);
\draw[->,thick] (2.75,0.4) -- (2.75,0.6);

\draw[-] (2.5,0) -- (2.5,1);
\draw[-] (2,0.2) -- (2.5,0.2);
\draw[-] (2.5,0.8) -- (3,0.8);
\draw[dashed] (2,0.8) -- (2.5,0.8);
\path (2,-0.1) node {$b_0$};
\path (2.5,-0.1) node {$b_1$};
\path (3,-0.1) node {$b_2$};
\path (1.75,0.02) node {$b_{00}=b_{10}$};
\path (1.92,0.2) node {$b_{11}$};
\path (1.92,0.8) node {$b_{01}$};
\path (1.75,1) node {$b_{02}=b_{12}$};
\end{tikzpicture}
\caption{Shear stress  (left: $\mu^0$, right:  $\mu^1$)}
\label{fig: shear stress}
\end{center}\end{figure}
\end{rmq}

%

\subsection{Approximate controllability with a localized regular control}\label{sec:approx loc}

This section is devoted to prove Theorem \ref{th cont approx}: we aim to prove approximate controllability of System \eqref{eq:transport} with a Lipschitz localized control.
This means that we remove the constraints $\supp(\mu^0)\subset \omega$, 
$\supp(\mu^1)\subset \omega$ and 
$v:= 0$, that we used in Section 
\ref{sec:approx dimn}. 
On the other side, we impose Condition \ref{cond1}.

Before the main proof, we need three useful results. First of all, we give a consequence of Condition \ref{cond1}:

\begin{cond}\label{cond2}
There exist two real numbers $T_0^*$, $T_1^*>0$ and a nonempty open set $\omega_0\subset\subset\omega$ such that 
\begin{enumerate}
\item[(i)] For each $x^0\in \supp(\mu^0)$, 
there exists $t^0\in[0, T_0^*]$ such that $\Phi_{t^0}^v(x^0)\in \omega_0,$
where $\Phi_{t}^v$ is the flow associated to $v$.

\item[(ii)] For each $x^1\in \supp(\mu^1)$, 
there exists $t^1\in[0,T_1^*]$ such that $\Phi_{-t^1}^{v}(x^1)\in \omega_0$.
\end{enumerate}
\end{cond}

\begin{lemma}\label{lemma cond}
If Condition \ref{cond1}  is satisfied for $\mu^0,~\mu^1\in \mc{P}_c(\mb{R}^d)$, then  Condition \ref{cond2} is satisfied too.
\end{lemma}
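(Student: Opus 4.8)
\textbf{Proof plan for Lemma \ref{lemma cond}.}

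The plan is to extract from the ``for each point there exists a time'' statement of Condition \ref{cond1} a uniform time bound and a single open set strictly contained in $\omega$, using compactness of $\supp(\mu^0)$ and $\supp(\mu^1)$ together with continuity of the flow $\Phi^v$. I will treat item (i) only, since item (ii) follows verbatim by replacing $v$ with $-v$ (equivalently, working with $\Phi^v_{-t}$).

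First I would fix $x^0\in\supp(\mu^0)$. By Condition \ref{cond1}(i) there is $t^0=t^0(x^0)>0$ with $y:=\Phi^v_{t^0}(x^0)\in\omega$. Since $\omega$ is open, pick a small closed ball $\overline{B}(y,2r)\subset\omega$ with $r=r(x^0)>0$. Because $v$ is Lipschitz and uniformly bounded, the flow map $(x,t)\mapsto\Phi^v_t(x)$ is continuous (indeed locally Lipschitz in $x$ uniformly for $t$ in compact sets, by Gronwall), so there is a neighbourhood $U_{x^0}$ of $x^0$ such that $\Phi^v_{t^0(x^0)}(x)\in B(y,r)\subset\subset\omega$ for all $x\in U_{x^0}$. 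Thus every point of $\supp(\mu^0)$ has a neighbourhood that is driven, at one fixed time, strictly inside $\omega$. Now $\supp(\mu^0)$ is compact, so finitely many such neighbourhoods $U_{x^0_1},\dots,U_{x^0_N}$ cover it, with associated times $t^0_1,\dots,t^0_N$ and associated balls $B(y_k,r_k)\subset\subset\omega$. Set $T_0^*:=\max_k t^0_k$ and $\omega_0^{(0)}:=\bigcup_{k=1}^N B(y_k,r_k)$; then for each $x^0\in\supp(\mu^0)$ there is $k$ with $x^0\in U_{x^0_k}$, hence $\Phi^v_{t^0_k}(x^0)\in B(y_k,r_k)\subset\omega_0^{(0)}$ with $t^0_k\in[0,T_0^*]$.

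To finish, I run the same argument for $\mu^1$ with the backward flow, obtaining $T_1^*$ and an open set $\omega_0^{(1)}\subset\subset\omega$; then take $\omega_0$ to be a nonempty open set with $\omega_0^{(0)}\cup\omega_0^{(1)}\subset\omega_0\subset\subset\omega$ (for instance a bounded open neighbourhood of the compact set $\overline{\omega_0^{(0)}\cup\omega_0^{(1)}}$ whose closure still lies in $\omega$; such a set exists because $\overline{\omega_0^{(0)}\cup\omega_0^{(1)}}$ is a compact subset of the open set $\omega$). With $T_0^*,T_1^*,\omega_0$ so defined, Condition \ref{cond2}(i)--(ii) hold, since enlarging $\omega_0$ only makes the membership conditions easier. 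The one point requiring a little care — and the only real ``obstacle'' — is the continuity/openness step: one must invoke (local) Lipschitz dependence of $\Phi^v_t$ on the initial datum, uniform for $t$ ranging over the compact interval $[0,t^0(x^0)]$, which is exactly the standard Gronwall estimate for the ODE $\dot x=v(x)$ and is legitimate because $v$ is assumed Lipschitz and bounded; everything else is a routine finite subcover argument.
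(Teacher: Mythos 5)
Your argument is correct and is essentially the paper's own proof: continuity of $x\mapsto\Phi^v_{t^0(x^0)}(x)$ to get a neighbourhood of each point of the support that lands in a ball compactly contained in $\omega$, a finite subcover by compactness of the supports, $T_k^*$ as the maximum of the finitely many times, and $\omega_0$ built from the union of the target balls (the paper simply takes that union itself as $\omega_0$, so your extra enlargement step is unnecessary but harmless).
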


\begin{proof}
We use a compactness argument. Let $\mu^0\in\mc{P}_c(\mb{R}^d)$
and assume that Condition  \ref{cond1} holds.
Let  $x^0\in\supp(\mu^0)$.
Using Condition  \ref{cond1},
 there exists $t^0(x^0)>0$ such that $\Phi_{t^0(x^0)}^v(x^0)\in \omega.$
 Choose $r(x^0)>0$ such that $B_{r(x^0)}(\Phi_{t^0(x^0)}^v(x^0))\subset\subset \omega$, where $B_r(x^0)$ denotes the open ball of radius $r > 0$ centered at point $x^0$  in $\mb{R}^d$. 
 Such $r(x^0)$ exists, since $\omega$ is open. By continuity of the application $x^1\mapsto \Phi_{t^0(x^0)}^v(x^1) $ (see  \cite[Th. 2.1.1]{BP07}), 
 there exists $\hat{r}(x^0)$ such that 
 $$x^1\in B_{\hat{r}(x^0)}(x^0)~~\Rightarrow~~\Phi_{t^0(x^0)}^v(x^1) \in B_{r(x^0)}(\Phi_{t^0(x^0)}^v(x^0)).$$
Since $\mu^0$ is compactly supported,  we can find a set $\{x^0_1,...,x^0_{N_0}\}\subset\supp(\mu^0)$ such that 
$$\supp(\mu^0)\subset \bigcup\limits_{i=1}^{N_0}B_{\hat{r}(x^0_i)}(x_i^0).$$
We similarly build a set $\{x^1_1,...,x^1_{N_1}\}\subset\supp(\mu^1)$.
Thus  Condition \ref{cond2} is satisfied for 
$$T_k^*:=\max\{ t^k(x^k_i):i\in\{1,...,{N_k}\}\},$$
with $k=0,1$ and 
$$\omega_0:=\left(\bigcup\limits_{i=1}^{N_0}B_{r(x^0_i)}(\Phi^v_{t^0(x^0_i)}(x^0_i))\right)
\bigcup\left(\bigcup\limits_{i=1}^{N_1}B_{r(x^1_i)}(\Phi^v_{t^1(x^1_i)}(x^1_i))\right)\subset\subset\omega
.$$
$\left.\right.$
\end{proof}
The second useful result is the following proposition, showing that we can store a large part of 
the mass of $\mu^0$ in $\omega$, under Condition \ref{cond2}.

\begin{prop}\label{prop1}
Let $\mu^0\in\mc{P}_c^{ac}(\mb{R}^d)$ satisfying the first item of Condition \ref{cond2}.
Then, for all $\varepsilon>0$, 
there exists a space-dependent vector field $\mathds{1}_{\omega}u$ 
Lipschitz and uniformly bound\-ed
and a Borel set $A\subset\mb{R}^d$
such that 
\begin{equation}\label{mu T0}
\mu^0(A)=\varepsilon\mbox{ and }
\supp(\Phi^{v+\mathds{1}_{\omega}u}_{T_0^*}\#\mu^0_{|A^c})\subset\omega.
\end{equation}
\end{prop}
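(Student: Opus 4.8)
\textbf{Proof plan for Proposition \ref{prop1}.}

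The plan is to reduce the problem to the compactness structure already extracted in Condition \ref{cond2} and then to build the desired vector field cell by cell, mimicking the flow-construction technique of Proposition \ref{prop dim=d}. First I would use the first item of Condition \ref{cond2}: for each $x^0\in\supp(\mu^0)$ there is a time $t^0(x^0)\in[0,T_0^*]$ with $\Phi^v_{t^0(x^0)}(x^0)\in\omega_0$, and by continuity of the flow (see \cite[Th. 2.1.1]{BP07}) a whole ball $B_{\hat r(x^0)}(x^0)$ gets mapped into a ball $B_{r(x^0)}(\Phi^v_{t^0(x^0)}(x^0))\subset\subset\omega_0\subset\subset\omega$ at time $t^0(x^0)$. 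Extracting a finite subcover $\{B_{\hat r(x^0_i)}(x^0_i)\}_{i=1}^{N}$ of $\supp(\mu^0)$ and choosing the Borel set $A$ inside the ``leftover'' region — the points not covered, together with a sliver of each ball trimmed to make the masses add up to exactly $\varepsilon$ — I obtain a Borel set $A$ with $\mu^0(A)=\varepsilon$ such that $\supp(\mu^0_{|A^c})$ is contained in finitely many compact balls $\overline{B_{\hat r(x^0_i)}(x^0_i)}$, each of which is flown by $v$ into $\omega$ by some time $t^0(x^0_i)\le T_0^*$.

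The second and main part is to realize the simultaneous transport: I want a single localized Lipschitz vector field $v+\mathds{1}_\omega u$ whose flow at the uniform time $T_0^*$ sends all of $\supp(\mu^0_{|A^c})$ into $\omega$. The idea is to follow the uncontrolled flow $\Phi^v$ on each ball $B_i$ until it enters $\omega$ at time $t^0(x^0_i)$ — here $u=0$ — and then, once a ball is inside $\omega$ (hence inside $\omega_0\subset\subset\omega$), to keep it there by switching on $u=-v$ on a neighborhood so that those points freeze in place until the final time $T_0^*$. Since the balls may enter $\omega$ at different times, and possibly overlap, I would first shrink the balls slightly and separate them (or merge overlapping ones), so that at each instant the ``frozen'' region and the ``still moving under $v$'' region are a positive distance apart; then the vector field $u(\cdot,t)$ interpolating between $0$ and $-v$ across a thin collar can be taken Lipschitz in space, measurable in time, uniformly bounded, and supported in $\omega$. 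The resulting $v+\mathds{1}_\omega u$ satisfies the Carathéodory hypotheses, so its flow is well-defined, and by construction every point of $\supp(\mu^0_{|A^c})$ lies in $\omega$ at time $T_0^*$; pushing forward and using that supports are preserved under the push-forward by a homeomorphism gives $\supp(\Phi^{v+\mathds{1}_\omega u}_{T_0^*}\#\mu^0_{|A^c})\subset\omega$.

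The delicate point — the step I expect to be the main obstacle — is the Lipschitz regularity and uniform boundedness of the interpolating field $u$ in the region where some balls have already entered $\omega$ and are being held fixed while others are still being carried by $v$. One must ensure these two regimes never collide: the trajectory of a not-yet-frozen ball must not run into the frozen region before its own freezing time. This can be arranged by enlarging $\omega_0$ to a slightly bigger open $\omega_1$ with $\omega_0\subset\subset\omega_1\subset\subset\omega$, freezing each ball only after it is well inside $\omega_0$, and using that there are only finitely many balls and finitely many entry times, so a uniform collar width and a uniform Lipschitz constant exist; alternatively, one can order the entry times $0\le \tau_1\le\dots\le\tau_N\le T_0^*$ and freeze the balls one after another on the successive time subintervals, which makes the spatial separation manifest at each step. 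Either way the construction is finite and the estimates are routine once the geometry is set up, so the Lipschitz bound follows; the bookkeeping of overlaps and entry times is where the real care is needed.
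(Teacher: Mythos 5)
Your first step (a finite subcover of $\supp(\mu^0)$ by balls flown into $\omega_0$ at finitely many times, plus trimming a Borel set of mass exactly $\varepsilon$) is in the spirit of Lemma \ref{lemma cond} and is fine. The gap is in the second step, and it is exactly the point you yourself flag as ``delicate'': one cannot, in general, arrange that the trajectory of a not-yet-frozen ball avoids the regions where earlier balls have been frozen. The uncontrolled trajectories are given data, and nothing in Condition \ref{cond2} prevents the orbit of $x^0_j$, before its own entry time $t^0(x^0_j)$, from passing through $\Phi^v_{t^0(x^0_i)}(B_i)$ after time $t^0(x^0_i)$; a single vector field cannot simultaneously vanish there (to hold ball $i$) and equal $v$ there (to carry ball $j$ through on schedule). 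Neither of your remedies removes this: a collar of any width only replaces the conflict by a Lipschitz interpolation that slows incoming particles down, and sequential freezing on successive time subintervals still leaves ball $j$ in motion while ball $i$ is pinned. The consequence is not a regularity failure but a loss of control on positions: particles of later balls that graze a frozen region or its collar are delayed, may exit again, and at time $T_0^*$ sit at $\Phi^v_s(x^0)$ for some $s$ strictly smaller than their scheduled entry time --- a point about which Condition \ref{cond2} says nothing. Your concluding claim that ``by construction every point of $\supp(\mu^0_{|A^c})$ lies in $\omega$ at time $T_0^*$'' is therefore not established; moreover your $A$ is fixed before the dynamics is built, so it cannot absorb these delayed particles.

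The paper's proof sidesteps the interference problem entirely by taking the control autonomous and proportional to $v$: it sets $u_K=(\theta_K-1)v$, so the total field is $\theta_K v$ with $\theta_K=1$ outside $\omega_0$ and $\theta_K=0$ on a large compact core $\omega_K\subset\omega_0$. Every controlled trajectory is then a time reparametrization of the corresponding uncontrolled one (Step 1 of the paper's proof), so particles never obstruct one another: a particle whose $v$-orbit, after entering $\omega_0$, stays in $\overline{\omega}_0$ until it reaches $\omega_K$ is trapped in $\overline{\omega}_0\subset\omega$ for all later times, in particular at $T_0^*$. The price is an exceptional set $S_K$ of particles whose orbit leaves $\overline{\omega}_0$ before reaching $\omega_K$; the paper proves $S_K$ is Borel and that $\mu^0(S_K)+\mu^0(\omega_0\setminus\omega_K)\leqslant\varepsilon$ for $K$ large, by a monotone-intersection and contradiction argument relying on absolute continuity of $\mu^0$. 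This a posteriori bad set, and the mass estimate for it, are precisely what is missing from your argument. To salvage your plan, abandon the per-ball, time-scheduled freezing in favour of such a damped field; the finite cover is then needed only, as in Lemma \ref{lemma cond}, to produce the uniform time $T_0^*$.
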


\begin{proof}
For each $k\in\mb{N}^*$, we  denote by $\omega_k$ the closed set 
 defined by
\begin{equation*}
\omega_k:=\{x^0\in\mb{R}^d:d(x^0,\omega^c_0)\geqslant 1/k\}
\end{equation*}
and a cutoff function 
$\theta_k\in \mc{C}^{\infty}(\mb{R}^d)$ satisfying 
\begin{equation*}
\left\{\begin{array}{l}
0\leqslant \theta_k\leqslant 1,\\
\theta_k=1\mbox{ in }\omega_0^c,\\
\theta_k=0\mbox{ in }\omega_k.
\end{array}\right.
\end{equation*}
For all $x^0\in\supp(\mu^0)$, we define
$$t_0(x^0):=\inf\{t\in\mb{R}^+:\Phi_t^v(x^0)\in \omega_0\}
\mbox{ and }
t_k(x^0):=\inf\{t\in\overline{\mb{R}}^+:\Phi_t^v(x^0)\in \omega_k\}.$$
For all $k\in\mb{N}^*$, we consider 
\begin{equation}\label{eq:control prop 35}
u_k:=(\theta_k-1) v
\end{equation}
and
$$S_k:=\{x^0\in \supp(\mu^0)\backslash\omega_0:
\exists s\in (t_0(x^0),t_k(x^0)), \mbox{~s.t.~}\Phi_s^v(x^0)\in \overline{\omega}_0^c\}.$$
The rest of the proof is divided into three steps:
\begin{itemize}
\item In Step 1, we prove that the range of the flow associated to $x^0$ with the control $u_k$ is included in the range of the flow associated to $x^0$ without control.
\item In Step 2, we show that  $S_k$ is a Borel set for all $k\in\mb{N}^*$.
\item In  Step 3, we prove that for a $K$ large enough we have 
 \begin{equation}\label{eq Sk eps}
\mu^0(\omega\backslash\omega_K)+\mu^0(S_K)\leqslant\varepsilon.
\end{equation}
\end{itemize}

\textbf{Step 1:} 
Consider the flow $y(t):=\Phi_{t}^{v}(x^0)$ associated to $x^0$ without control, \textit{i.e.}
 the solution to 
\begin{equation*}
\left\{\begin{array}{l}
\dot{y}(t)=v(y(t)),~t\geqslant 0,\\\noalign{\smallskip}
y(0)=x^0
\end{array}\right.
\end{equation*}
and the flow  $z_k(t):=\Phi_{t}^{v+u_k}(x^0)$ associated to $x^0$ with the control $u_k$ 
given in \eqref{eq:control prop 35}, \textit{i.e.} the solution to
\begin{equation}\label{eq:carac y}
\left\{\begin{array}{l}
\dot{z}_k(t)=(v+u_k)(z_k(t))=\theta_k(z_k(t))\times v(z_k(t)),~t\geqslant 0,\\\noalign{\smallskip}
z_k(0)=x^0.
\end{array}\right.
\end{equation}
Consider the solution $\gamma_k$ to the following system 
\begin{equation}\label{eq:carac gamma}
\left\{\begin{array}{l}
\dot{\gamma_k}(t)=\theta_k(y(\gamma_k(t))),~t\geqslant 0,\\\noalign{\smallskip}
\gamma(0)=0.
\end{array}\right.
\end{equation}
Since $\theta_k$ and $y$ are Lipschitz, then System \eqref{eq:carac gamma} admits a solution defined for all times.
We remark that $\xi_k:=y\circ \gamma_k$ is solution to System \eqref{eq:carac y}. 
Indeed, for all $t\geqslant 0$ it holds
\begin{equation*}
\left\{\begin{array}{l}
\dot\xi_k(t)=\dot\gamma_k(t)\times\dot y(\gamma_k(t))
=\theta_k(\xi_k(t))\times v(\xi_k(t)),~t\geqslant 0,\\\noalign{\smallskip}
\xi_k(0)=y(\gamma_k(0))=y(0).
\end{array}\right.
\end{equation*}
By uniqueness of the solution to System \eqref{eq:carac y}, we obtain
\begin{equation*}
y(\gamma_k(t))=z_k(t)\mbox{ for all }t\geqslant 0.
\end{equation*}
Using the fact that $0\leqslant \theta\leqslant 1$ and the definition of $\gamma_k$, we have
 $$\left\{\begin{array}{ll}
 \gamma_k\mbox{ increasing},&\\
 \gamma_k(t)\leqslant t&~\forall t\in[0,t_k(x^0)],\\
 \gamma_k(t)\leqslant t_k(x^0)&~\forall t\geqslant t_k(x^0).
 \end{array}\right. $$
We deduce  that, for all $x^0\in\supp(\mu^0)$, it holds
$$\{z_k(t):t\geqslant 0\}
 \subset\{y(s):s\in[0,t_k(x^0)]\}.$$

\textbf{Step 2:} 
We now prove that $S_k$ is a Borel set 
by showing that 
the set 
$$R_k:=\{x^0\in \mb{R}^d:t_0(x^0)<\infty\mbox{ and }
\exists s\in (t_0(x^0),t_k(x^0))\mbox{ s.t. } \Phi_s^v(x^0)\in \overline{\omega}_0^c\}$$
is open.
Let $k\in\mb{N}^*$,  $x^0$ be an element of $R_k$
and search $r(x^0)>0$ such that $B_{r(x^0)}(x^0)\subset R_k$.  
There exists $s\in (t_0(x^0),t_k(x^0))$ such that $\Phi_s^v(x^0)\in \overline{\omega}_0^c$.
Since  $\overline{\omega}^c_0$ is open, 
for a $\beta>0$, we have
$B_{\beta}(\Phi_{s}^{v}(x^0))\subset \overline{\omega}_0^c$.
By continuity of the application $x^1\mapsto \Phi_{s}^v(x^1)$,
there exists $r(x^0)>0$ such that 
$$x^1\in B_{r(x^0)}(x^0)\Rightarrow \Phi_{s}^v(x^1)\in B_{\beta}(\Phi_{s}^{v}(x^0)).$$
%
Thus, for all $k\in\mb{N}^*$, $R_k$ is open and $S_k$ is a Borel set.

\textbf{Step 3:}
We now prove that \eqref{eq Sk eps} holds for a $K$ large enough.
Since we deal we AC measure,
there exists $K_0\in\mb{N}^*$ such that  for all $k\geqslant K_0$
$$\mu^0(\omega_0\backslash \omega_{k})\leqslant \varepsilon/2.$$ 
Argue now by contradiction to prove 
that  there exists  $K_1\geqslant K_0$ such that 
\begin{equation*}
\mu^0(S_{K_1})\leqslant\varepsilon/2.
\end{equation*}
Assume that   $\mu^0(S_k)>\varepsilon/2$ for all $k\geqslant K_0$.
Using the inclusion $S_{k+1}\subset S_k$, we deduce that 
$$\mu^0\left(\bigcap_{k\in\mb{N}^*}S_k\right)\geqslant\varepsilon/2.$$
Since $\mu^0$ is absolute continuous with respect to $\lambda$ (the Lebesgue measure), 
there exists $\alpha>0$ such that
$$\lambda\left(\bigcap_{k\in\mb{N}^*}S_k\right)\geqslant\alpha.$$
We deduce that the intersection of the set $S_k$ is nonempty. 
Let $\overline{x}^0\in\supp(\mu^0)\backslash\overline{\omega}_0$ be an element of this intersection. 
By definition of $S_k$, for all $k\geqslant K_0$, there exists $s_k$ satisfying 
\begin{equation}\label{conv sk}
\left\{\begin{array}{l}
s_k\in (t_0(\overline{x}^0),t_k(\overline{x}^0)),\\
\Phi_{s_k}^{ v}(\overline{x}^0)\in \overline{\omega}_0^c.
\end{array}\right.
\end{equation}
Moreover, the convergence of $t_k(\overline{x}^0)$ to $t_0(\overline{x}^0)$, implies that 
\begin{equation}\label{eq sk conv}
s_k\rightarrow t_0(\overline{x}^0).
\end{equation}
Using the continuity of $x^1\mapsto\Phi^{v}_t(x^1)$  and the definition of $t_0(x^0)$, 
there exists $\beta>0$ such that 
\begin{equation}\label{eq x(t) in }
\Phi^{v}_t(\overline{x}^0)\in \omega_0 \mbox{ for all }t\in(t_0,t_0+\beta).
\end{equation}
We deduce that \eqref{eq x(t) in } contradicts \eqref{conv sk} and \eqref{eq sk conv}.
Thus  there exists $K\in \mb{N}^*$ such that 
$$\mu^0(S_K)+\mu^0(\omega\backslash \omega_K)\leqslant \varepsilon.$$
Since we deal with AC measures, we add a Borel set to have the equality in \eqref{mu T0}, \textit{i.e.}
 there exists a Borel set $S$ 
 such that 
 $$\mu^0(S_K\cup \omega\backslash \omega_K \cup S)=\varepsilon.$$
We conclude that, for $u$ defined by
\begin{equation*}
u(t):=u^1:=u_K \mbox{ for all }t\in [0,T^*_0],
\end{equation*}
and $A:=S_K\cup\omega\backslash \omega_K\cup S$,
Properties \eqref{mu T0} are satisfied.
\end{proof}

The third useful result for the proof of Theorem \ref{th cont approx} allows 
to approximately steer a measure contained in $\omega$ to a measure contained in an open hypercube $S\subset\subset \omega$.

\begin{prop}\label{prop2}
Let $\mu^0\in\mc{P}_c^{ac}(\mb{R}^d)$ satisfying  $\supp(\mu^0)\subset\omega.$
Define an open hypercube $S$ strictly included in $\omega\backslash\supp(\mu^0)$ and choose $\delta>0$. 
Then, for all $\varepsilon>0$, there exists a vector field $\mathds{1}_{\omega}u$, Lipschitz  and uniformly bounded
 and a Borel set $A$ such that
$$\mu^0(A)=\varepsilon\mbox{ and }
\supp(\Phi^{v+\mathds{1}_{\omega}u}_{\delta}\#\mu^0_{|A^c})\subset S.$$
\end{prop}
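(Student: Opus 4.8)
The plan is to reduce Proposition~\ref{prop2} to the situation already treated in Proposition~\ref{prop dim=d}. Since $\supp(\mu^0)\subset\omega$ and $S\subset\subset\omega\setminus\supp(\mu^0)$, one can pick a slightly larger hypercube $Q$ with $\supp(\mu^0)\cup \overline S\subset Q\subset\subset\omega$, and after an affine change of coordinates assume $Q=(0,1)^d$. The target will be a suitably chosen AC measure $\mu^1$ with $\supp(\mu^1)\subset S$ and total mass $1$ — for instance the normalized restriction of Lebesgue measure to $S$. Because $\mu^0$ is AC and compactly supported in $Q\subset\subset\omega$, Proposition~\ref{prop dim=d} (applied with the control region $\omega$, the time interval $[0,\delta]$ in place of $[0,T]$, which only rescales the velocities by $1/\delta$ and keeps them bounded and Lipschitz) produces, for any $\eta>0$, a uniformly bounded, Lipschitz-in-space, measurable-in-time control $\mathds 1_\omega u$ such that the solution $\mu$ of \eqref{eq:transport dim2} satisfies $W_1(\mu^1,\mu(\delta))\le\eta$ and $\supp(\mu(t))\subset\omega$ for all $t\in[0,\delta]$. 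Here I must reinstate the drift $v$: since in Proposition~\ref{prop dim=d} we reduced to $v\equiv0$ by replacing $u$ with $u-v$, running the argument with the true $v$ just means using the control $\mathds 1_\omega(u-v)$, which is still Lipschitz in space, bounded and measurable in time because $v$ is Lipschitz and bounded. Thus we obtain a genuine control for System~\eqref{eq:transport}.

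The remaining point is to convert the closeness $W_1(\mu^1,\mu(\delta))\le\eta$ into the support inclusion $\supp(\Phi^{v+\mathds 1_\omega u}_\delta\#\mu^0_{|A^c})\subset S$ for an appropriately chosen Borel set $A$ of $\mu^0$-mass exactly $\varepsilon$. The idea is: $\mu(\delta)=\Phi^{v+\mathds1_\omega u}_\delta\#\mu^0$, and since $\mu(\delta)$ is $W_1$-close to $\mu^1$ which is supported in $S$, only a small fraction of the mass of $\mu(\delta)$ can sit outside a slightly shrunk copy $S'\subset\subset S$. Quantitatively, if $d(S',\partial S)=:\rho>0$, then $\mu(\delta)(\mathbb R^d\setminus S)\le \mu(\delta)(\mathbb R^d\setminus S')$ and any mass of $\mu(\delta)$ at distance $\ge\rho$ from $S'$ must, in any transference plan to $\mu^1$ (which is supported in $S'$ if we also chose $\mu^1$ supported there), be moved a distance $\ge\rho$; hence $\mu(\delta)(\mathbb R^d\setminus B_\rho(S'))\le \eta/\rho$. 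Choosing $\eta$ small enough, the set $E:=\{x: \Phi^{v+\mathds1_\omega u}_\delta(x)\notin S\}$ satisfies $\mu^0(E)=\mu(\delta)(\mathbb R^d\setminus S)\le\varepsilon$. Since $\mu^0$ is AC (hence atomless), we may enlarge $E$ to a Borel set $A\supset E$ with $\mu^0(A)=\varepsilon$ exactly; then on $A^c$ the flow lands inside $S$, giving $\supp(\Phi^{v+\mathds1_\omega u}_\delta\#\mu^0_{|A^c})\subset\overline{S}$, and a final harmless shrinking of the working cube (or replacing $S$ by a slightly larger open cube from the start) makes the inclusion land in the open set $S$ as stated.

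The main obstacle I expect is the bookkeeping in this last step: Proposition~\ref{prop dim=d} is stated in the regime where the control region \emph{contains} both supports, whereas here we genuinely use the localized control $\mathds 1_\omega u$, so one must check that the construction there never moves mass outside $\omega$ (which is exactly the extra conclusion $\supp(\mu(t))\subset\omega$ recorded in Proposition~\ref{prop dim=d}) and that reintroducing $-v$ inside $\omega$ does not break the Carathéodory regularity or the support control — both are immediate but must be said. A secondary technical nuisance is making sure the ``approximate'' conclusion of Proposition~\ref{prop dim=d} can be upgraded to a \emph{support} statement: this is precisely the Wasserstein-to-support estimate above, and it works only because we are free to choose the target $\mu^1$ supported strictly inside $S$ with a definite margin $\rho>0$, so that small $W_1$ error forces almost all mass into $S$. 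Once $\eta$ is fixed in terms of $\varepsilon$, $\rho$ and $\mathrm{diam}(\omega)$, the argument closes.
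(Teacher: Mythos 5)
Your reduction to Proposition \ref{prop dim=d} hinges on its very first step: choosing a hypercube $Q$ with $\supp(\mu^0)\cup\overline{S}\subset Q\subset\subset\omega$. This is where the argument breaks down in general: $\omega$ is only assumed to be a nonempty open \emph{connected} set, not a convex one, so no such $Q$ need exist. Take for instance $\omega$ an annulus in $\mb{R}^2$ with $\supp(\mu^0)$ and $S$ on opposite sides of the hole: any hypercube (indeed any convex set) containing both must cross the hole and therefore leaves $\omega$. Since the grid construction of Proposition \ref{prop dim=d} transports mass along coordinatewise straight lines inside the working hypercube, it cannot be run in such a geometry; this is precisely the limitation acknowledged in the remark following Proposition \ref{prop2}, which says that the straight-line method would require $\omega$ to be convex. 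The paper's proof avoids this by building a gradient-flow control equal to $k\nabla\eta-v$ on a smooth $\omega_1\subset\subset\omega$, where $\eta$ is a weight function positive in $\omega_1$, vanishing on $\partial\omega_1$, with $|\nabla\eta|$ bounded below outside a smooth core $S_0\subset S$; for $k$ large this funnels every trajectory into $S_0$ within time $\delta$ whatever the shape of $\omega$, and Proposition \ref{prop1} (applied with $S$, $S_0$, $\delta$, $v+u_K$ in place of $\omega$, $\omega_0$, $T_0^*$, $v$) then stops all but an $\varepsilon$-mass there.

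The remainder of your argument is sound and would yield a proof in the convex case: the conversion of the Wasserstein bound into a mass bound, namely $\mu(\delta)(\mb{R}^d\setminus S)\leqslant W_1(\mu(\delta),\mu^1)/\rho$ when the target is supported $\rho$-deep inside $S$, is correct, as are the enlargement of $E=(\Phi_\delta^{v+\mathds{1}_\omega u})^{-1}(S^c)$ to a Borel set of $\mu^0$-mass exactly $\varepsilon$ (using non-atomicity of the AC measure $\mu^0$) and the preliminary shrinking of $S$ to absorb the closure in the support. One smaller point to repair even then: $\mathds{1}_{\omega}(u-v)$ is in general not Lipschitz across $\partial\omega$ unless $u-v$ vanishes in a neighborhood of it; you should cancel $v$ only on an intermediate open set $\omega_1$ with $Q\subset\subset\omega_1\subset\subset\omega$ and interpolate the control to zero in the annulus $\omega\setminus\omega_1$, as the paper itself does, after checking that the trajectories issued from $\supp(\mu^0)$ never enter that annulus.
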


\begin{proof}
Consider $S_0$ a nonempty open set of $\mb{R}^d$ of class $\mc{C}^{\infty}$ strictly included in $S$ and $\omega_1$ an open set of $\mb{R}^d$ of class $\mc{C}^{\infty}$ satisfying 
$$\supp(\mu^0)\cup S\subset\subset\omega_1\subset\subset\omega.$$
An example is given in Figure \ref{fig:constr S}.
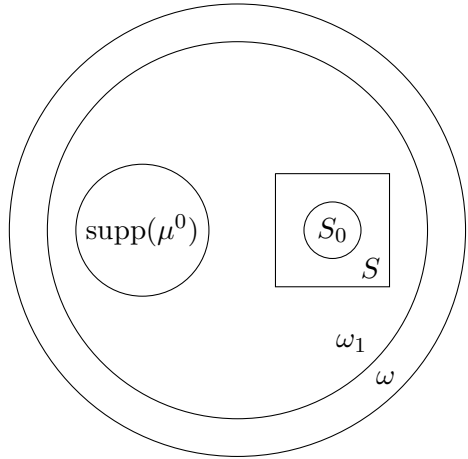
\begin{figure}[h]
\begin{center}
\begin{tikzpicture}[scale=2.5]
\draw (1,1) circle (1cm);
\draw (1,1) circle (1.2cm);
\draw (0.5,1) circle (0.35cm);
\draw (1.2,1.3) -- (1.2,0.7) -- (1.8,0.7) -- (1.8,1.3) -- (1.2,1.3);
\draw (1.5,1) circle (0.15cm);
\path (1.78,0.22) node {$\omega$};
\path (1.6,0.4) node {$\omega_1$};
\path (1.7,0.8) node {$S$};
\path (1.5,1) node {$S_0$};
\path (0.5,1) node {$\supp(\mu^0)$};
\end{tikzpicture}
\caption{Construction of $\omega_1$}\label{fig:constr S}
\end{center}\end{figure}
From \cite[Lemma 1.1, Chap. 1]{FI96} (see also \cite[Lemma 2.68, Chap. 2]{C09}), there exists 
 a function $\eta \in\mathcal{C}^2(\overline{\omega_1})$  satisfying 
\begin{equation}\label{prop eta}
 \kappa_0\leqslant|\nabla\eta |\leqslant \kappa_1\mathrm{~in~}\omega_1\backslash S_0,~~~
  \eta>0\mathrm{~in~}\omega_1 ~~~\mathrm{and}~~~
   \eta=0\mathrm{~on~}\partial\omega_1,
\end{equation}
with $\kappa_0,\kappa_1>0$. 
Let $k\in\mb{N}^*$.
Consider $u_k:\mb{R}^d\rightarrow\mb{R}^d$ Lipschitz and uniformly bounded satisfying
\vspace*{-3mm}\begin{equation*}
u_k:=\left\{\begin{array}{ll}
k\nabla \eta -v&\mbox{in}~ \omega_1,\\
0&\mbox{in}~\omega^c.
\end{array}\right.
\vspace*{-2mm}\end{equation*}

Let $x^0\in \supp(\mu^0)$. 
Consider the flow $z_k(t)=\Phi_t^{v+u_k}(x^0)$ associated to $x^0$ with the control $u_k$,
\textit{i.e.} the solution to system 
\vspace*{-2mm}\begin{equation}\label{eq:carac y bis}
\left\{\begin{array}{l}
\dot{z}_k(t)=v(z_k(t))+u_k(z_k(t)),~t\geqslant 0,\\\noalign{\smallskip}
z_k(0)=x^0.
\end{array}\right.
\vspace*{-2mm}\end{equation}
The different conditions in \eqref{prop eta} imply that 
\vspace*{-2mm}\begin{equation}\label{normal}
n\cdot\nabla\eta<C<0\mbox{ on }  \partial\omega_1, 
\vspace*{-2mm}\end{equation}
where $n$ represents the outward unit normal to $\partial\omega_1$.
Since $\supp(\mu^0)\subset\omega_1$, it holds $z_k(t)\in\omega_1$ for all $t\geqslant 0$,
otherwise, by taking the scalar product of \eqref{eq:carac y bis} and $n$ on $\partial\omega_1$, we obtain a contradiction
with \eqref{normal}.

We now prove that there exists $K(x^0)\in\mb{N}^*$ such that for all $k\geqslant K(x^0)$ 
there exists $t_k(x^0)\in(0,\delta)$ such that 
$z_k(t_k(x^0))$ belongs to $S_0$. By contradiction, assume that there exists a sequences 
$\{k_n\}_{n\in\mb{N}^*}\subset \mb{N}^*$
such that for all $t\in(0,\delta)$
\vspace*{-1mm}\begin{equation}\label{eq:Phi not in omega}
z_{k_n}(t)\in S_0^c.
\vspace*{-1mm}\end{equation}
Consider the function $f_n$ defined  for all $t\in[0,\delta]$ by
\vspace*{-1mm}\begin{equation}\label{def fn}
f_n(t):=k_n\eta(z_{k_n}(t)).
\vspace*{-1mm}\end{equation}
Its time derivative is given for all $t\in[0,\delta]$ by
\vspace*{-1mm}\begin{equation*}
\dot f_n(t)=k_n\dot z_{k_n}(t) \cdot \nabla \eta (z_{k_n}(t))=k_n^2|\nabla \eta(z_{k_n}(t))|^2
\vspace*{-1mm}\end{equation*}
Then, using \eqref{eq:Phi not in omega}, 
 properties \eqref{prop eta} of $\eta$ and definition \eqref{def fn} of $f_n$, it holds
\vspace*{-1mm}\begin{equation*} 
f_n(\delta)\geqslant
k_n^2\kappa_0^2\delta
\mbox{ ~~and ~~}
f_n(\delta)\leqslant k_n\|\eta\|_{\infty}.
\vspace*{-1mm}\end{equation*}
We observe that 
the two last inequalities are in contradiction for $n$ large enough.
Then there exists $K(x^0)\in\mb{N}^*$ such that for all $k\geqslant K(x^0)$ there exists $t_k(x^0)\in(0,\delta)$ such that $z_k(t_k(x^0))$ belongs to $S_0$.

By continuity, there exists $r(x^0)>0$  such that $\Phi_{t_{K(x^0)}(x^0)}^{v+u_{K(x^0)}}(x^1) $ belongs to $S_0$ for all $x^1\in B_{r(x^0)}(x^0)$. 
Since $v+u_k$ is linear with respect to $k$ in $\omega_1$,  then, using the same argument as in Step 1 of the proof of Proposition \ref{prop1}, 
the range of the flow $\Phi_{\cdot}^{v+u_k}$ is independent of $k$.
Thus, for all $k\geqslant K(x^0)$ there exists $t_k^0(x^0)\in(0,\delta)$ such that $\Phi_{t^0_k(x^0)}^{v+u_{k}}(x^1) \in S_0$ for all $x^1\in B_{r(x^0)}(x^0)$. 

By compactness, there exists $\{x^0_1,...,x^0_{N_0}\}$ such that 
\vspace*{-2mm}\begin{equation*}
\supp(\mu^0)\subset \bigcup_{i=1}^{N_0}B_{r(x^0_i)}(x^0_i).
\vspace*{-2mm}\end{equation*}
We deduce that for
$K:=\max_i \{K(x^0_i)\}$,
 for all $x^0\in \supp(\mu^0)$
there exists $t^0(x^0)$ for which $\Phi_{t^0(x^0)}^{v+u_K}(x^0)$ belongs to $S_0$. 
We remark that the first item of Condition  \ref{cond2}  holds replacing $\omega$, $\omega_0$ and $T_0^*$ by $S$, $S_0$ and $\delta$, respectively. 
We conclude applying Proposition \ref{prop1} replacing $\omega$, $\omega_0$, $T_0^*$ and $v$ by $S$, $S_0$, $\delta$ and $v+u_K$, respectively.
\end{proof}

\begin{rmq}
An alternative method to prove Proposition \ref{prop2} involves building an explicit flow composed with straight lines as in the proof of Proposition \ref{prop dim=d}.
However, for such method we need to assume that  $\omega$ is convex, contrarily to the more general approach developed in the proof of Proposition \ref{prop2}.
\end{rmq}

We now have all the tools to prove Theorem \ref{th cont approx}. 

\begin{proof}[Proof of Theorem \ref{th cont approx}]
Consider $\mu^0,\mu^1$ satisfying Condition  \ref{cond1}. 
%
By Lemma \ref{lemma cond}, there exist $T_0^*,~T_1^*, ~\omega_0$ for which  $\mu^0,\mu^1$ satisfy Condition \ref{cond2}. 
Let $\delta,~\varepsilon>0$ and $T:=T_0^*+T_1^*+\delta$.
We now prove that we can construct a Lipschitz uniformly bounded 
and  control $\mathds{1}_{\omega}u$
 such that the corresponding solution $\mu$ to System \eqref{eq:transport} satisfies
\vspace*{-2mm}\begin{equation*}
W_1(\mu(T),\mu^1)\leqslant \varepsilon.
\vspace*{-2mm}\end{equation*}

Denote by $T_0:=0$, $T_1:=T_0^*$,  $ T_2:=T_0^*+\delta/3$, $T_3:=T_0^*+2\delta/3$, $T_4:=T_0^*+\delta$ and  $ T_5:=T_0^*+T_1^*+\delta$. 
Also fix an open hypercube $S\subset\subset \omega\backslash\omega_0$. 
There exist $R>0$ and $\overline{x}\in\mb{R}^d$ such that the supports of $\mu^0$ and $\mu^1$ are strictly included in a hypercube with edges of length $R$. 
Define $$\overline{R}:=R+T\times\sup_{\mb{R}^d}|v|.$$
Applying Proposition \ref{prop1} on $[T_0,T_1]\cup[T_4,T_5]$ and Proposition \ref{prop2} on $[T_1,T_2]\cup[T_3,T_4]$, 
we can construct some space-dependent controls $u^1,~u^2,~u^4,~u^5$  Lipschitz and uniformly bounded, with $\supp(u^i)\subset\omega$, and two Borel sets $A_0$ and $A_1$ such that
$$\mu^0(A_0)=\mu^1(A_1)=\dfrac{\varepsilon}{2d\overline{R}}.$$
Moreover, the solution forward in time to
\begin{equation*}
	\left\{
	\begin{array}{ll}
\partial_t\rho_0 +\nabla\cdot((v+\mathds{1}_{\omega}u^1)\rho_0)=0
&\mbox{ in }\mb{R}^d\times[T_0,T_1],\\\noalign{\smallskip}
\partial_t\rho_0 +\nabla\cdot((v+\mathds{1}_{\omega}u^2)\rho_0)=0
&\mbox{ in }\mb{R}^d\times[T_1,T_2],\\\noalign{\smallskip}
\rho_0(T_0)=\mu^0_{|A_0^c}&\mbox{ in }\mb{R}^d
	\end{array}
	\right.
\end{equation*}
and the solution backward in time to
\begin{equation*}
	\left\{
	\begin{array}{ll}
\partial_t\rho_1 +\nabla\cdot((v+\mathds{1}_{\omega}u^5)\rho_1)=0
&\mbox{ in }\mb{R}^d\times[T_4,T_5],\\\noalign{\smallskip}
\partial_t\rho_1 +\nabla\cdot((v+\mathds{1}_{\omega}u^4)\rho_1)=0
&\mbox{ in }\mb{R}^d\times[T_3,T_4],\\\noalign{\smallskip}
\rho_1(T_5)=\mu^1_{|A_1^c}&\mbox{ in }\mb{R}^d
	\end{array}
	\right.
\end{equation*}
satisfy $\Supp(\rho_0(T_2))\subset S$ and $\Supp(\rho_1(T_3))\subset S.$ 

We remark that
$|\rho_0(T_2)|=|\rho_1(T_3)|=1-\varepsilon/2d\overline{R}$. 
We now apply Proposition \ref{prop dim=d} to approximately steer $\rho_0(T_2)$ to $\rho_1(T_3)$ inside $S$ as follows: we find a control $u^3$ on the time interval $[T_2,T_3]$ satisfying $\supp(u^3)\subset S$ such that the solution $\rho$ to
\begin{equation*}
	\left\{
	\begin{array}{ll}
\partial_t\rho +\nabla\cdot((v+\mathds{1}_{\omega}u^3)\rho)=0
&\mbox{ in }\mb{R}^d\times[T_2,T_3],\\\noalign{\smallskip}
\rho(T_2)=\rho_0(T_2)&\mbox{ in }\mb{R}^d
	\end{array}
	\right.
\end{equation*}
satisfies $$W_1(\rho(T_3),\rho_1(T_3))\leq \frac{\varepsilon}{2e^{2L(T_5-T_3)}},$$
where $L$ is the uniform Lipschitz constant for $u^4$ and $u^5$. 
Thus, denoting by $u$ the 
concatenation of $u^1$, $u^2$, $u^3$, $u^4$, $u^5$ on the time interval $[0,T]$, we approximately steer 
$\mu^0_{|A_0^c}$ to $\mu^1_{|A_1^c}$, since by \eqref{ine wasser 2} the solution 
$\mu$ to 
\begin{equation*}
	\left\{
	\begin{array}{ll}
\partial_t\mu +\nabla\cdot((v+\mathds{1}_{\omega}u^i)\mu)=0
&\mbox{ in }\mb{R}^d\times[T_{i-1},T_i],i\in\{1,...,5\},\\\noalign{\smallskip}
\mu(0)=\mu^0_{|A_0^c}&\mbox{ in }\mb{R}^d
	\end{array}
	\right.
\end{equation*}
satisfies
\begin{equation}\label{th1 eq1}
W_1(\Phi_T^{v+u}\#\mu^0_{|A_0^c},\mu^1_{|A_0^c})
=W_1(\mu(T_5),\mu^1_{|A_1^c})
\leq e^{2L(T_5-T_3)}\frac{\varepsilon}{2e^{2L(T_5-T_3)}}=\dfrac{\varepsilon}{2}.
\end{equation}
 Since we deal with AC measures, 
 using Properties \ref{prop Wp}, 
 there exists a measurable map
  $\gamma :\mb{R}^d\rightarrow\mb{R}^d$ such that
 \begin{equation*}
\left\{\begin{array}{l}
 \gamma\#\mu^1_{|A_1}=\Phi_T^{v+u}\#\mu^0_{|A_0},\\\noalign{\smallskip}
  W_1(\Phi_T^{v+u}\#\mu^0_{|A_0},\mu^1_{|A_1})
=\displaystyle\int_{\mb{R}^d}|x-\gamma(x)|d \mu^1_{|A_1}(x).
\end{array}\right.
\end{equation*}  
We deduce that
\begin{equation}\label{th1 eq2}
\begin{array}{rcl}
W_1(\Phi_T^{v+u}\#\mu^0_{|A_0},\mu^1_{|A_1})&=& \displaystyle\int_{\mb{R}^d}|x-\gamma(x)|d\mu^1_{|A_1}(x)
\leqslant  d\overline{R}\times\dfrac{\varepsilon}{2d\overline{R}}=\dfrac{\varepsilon}{2}.
\end{array}
\end{equation}
Inequalities \eqref{ine wasser},  \eqref{th1 eq1} and \eqref{th1 eq2} leads to the conclusion:
$$W_1(\Phi_T^{v+u}\#\mu^0,\mu^1)
\leqslant W_1(\Phi_T^{v+u}\#\mu^0_{|A_0^c},\mu^1_{|A_1^c})
+W_1(\Phi_T^{v+u}\#\mu^0_{|A_0},\mu^1_{|A_1})\leqslant \varepsilon.$$
~
\end{proof}

\section{Exact controllability}\label{sec:exact cont}

In this section, we study  exact controllability for System \eqref{eq:transport}.
In Section \ref{sec:non exact cont}, we show that exact controllability of System \eqref{eq:transport} does not hold for Lipschitz or BV controls.
In Section \ref{sec:exact cont borel}, we prove Theorem \ref{th cont exact}, \textit{i.e.} exact controllability of System \eqref{eq:transport} 
with a Borel localized control under some geometrical conditions.

\subsection{Negative results for exact controllability}\label{sec:non exact cont}

In this section, we show that exact controllability does not hold in general for Lipschitz or BV controls.
We will see that topological aspects play a crucial role at this level.

%

\subsubsection*{a) Non exact controllability with Lipschitz controls}


As explained in the introduction,  
if we impose the classical Carath\'eodory condition of $\mathds{1}_{\omega}u:\mb{R}^d\times\mb{R}^+\rightarrow\mb{R}^d$ being uniformly bounded, Lipschitz in space and  measurable in time, 
then the flow $\Phi^{v+\mathds{1}_{\omega}u}_t$ is a homeomorphism (see \cite[Th. 2.1.1]{BP07}).
More precisely, the flow and its inverse are locally Lipschitz.
This implies that the support of $\mu^0$ and $\mu(T)$ are homeomorph.
Thus, if the support of  $\mu^0$ and $\mu^1$ are not homeomorph, 
then exact controllability does not hold with Lipschitz controls. 
In particular, we cannot steer a measure which support is connected to a measure which support is composed of two connected components with Lipschitz controls and conversely.

\subsubsection*{b) Non exact controllability with BV controls}


To hope to obtain  exact controllability of System \eqref{eq:transport},
it is then  necessary to search for a control with less regularity. 
A weaker condition on the regularity of the vector field 
for the well-posedness of System \eqref{eq:transport} has been given in \cite{A04}.
Consider an initial data $\mu^0$ in $L^{\infty}\cap \mc{P}^{ac}_c(\mb{R}^d)$,
a target $\mu^1$ in $L^{\infty}\cap \mc{P}^{ac}_c(\mb{R}^d)$
and a vector field $u$ satisfying:
\begin{enumerate}
\item[(i)]$u(\cdot,t)\in BV_{loc}(\mb{R}^d)$ for a.e. $t\in(0,T)$.
\item[(ii)]For all $R>0$, 
\vspace*{-2mm}\begin{equation*}
\begin{array}{c}\sup\limits_{\mb{R}^d} |u|+\displaystyle\int_0^T\|[\Div u]^-\|_{L^{\infty}(B_R(0))}+\int_0^T\|u\|_{BV(B_R(0))}dt\\\hspace*{3cm}
+\displaystyle\int_0^T\int_{B_R(0)}|\Div u(t)|dxdt<+\infty.\end{array}
\end{equation*}
\end{enumerate}
Under these assumptions, there exists a unique solution to System \eqref{eq:transport} in the sense of \cite{A04}
(see \cite[Theorems 4.1 and 6.2]{A04}).

We now give an example of non exact controllability of System \eqref{eq:transport} 
with a velocity field satisfying these assumptions and $\omega=\mb{R}^d$. 
Consider $$\mu^0:=\frac12\mathds{1}_{(-1,0)}(x)dx+\frac12\mathds{1}_{(1,2)}(x)dx\mbox{~~ and ~~}\mu^1:=\frac12\mathds{1}_{(-1,1)}dx.$$ 
Suppose that there exists $u$ satisfying (i), (ii) 
and $\Phi_T^u\#\mu^0=\mu^1$.
The solution to System \eqref{eq:transport} is then unique.
Consider $y$ and $z$ solutions to 
\begin{equation*}
\left\{\begin{array}{l}
\dot y(t) =u(y(t),t)\mbox{ for a.e. } t\in(0,T),\\\noalign{\smallskip}
y(0)=0
\end{array}\right.
\mbox{ ~~and~~}
\left\{\begin{array}{l}
\dot z(t) =u(z(t),t)\mbox{ for a.e. } t\in(0,T),\\\noalign{\smallskip}
z(0)=1.
\end{array}\right.
\end{equation*}
Since $\Phi_T^u((-1,0))=(-1,0)$ and $\Phi_T^u((1,2))=(0,1)$, it holds $y(T)=z(T)$.
Thus there exists an infimum time $t_1\in[0,T]$   such that $y(t_1)=z(t_1)$. 
Since $\Phi_T^u\#\mu^0=\mu^1$, we obtain a contradiction with (ii) by observing that
 there exists $t_0\in (0,t_1)$ such that
$$\displaystyle\int_0^T\| [\partial_x u(t)]^-\|_{\infty}dt
\geqslant \displaystyle\int_{t_0}^{t_1}\left|\dfrac{u(y(t),t)-u(z(t),t)}{y(t)-z(t)}\right|dt
= \displaystyle\int_{t_0}^{t_1}\left|\dfrac{\dot y(t)-\dot z(t)}{y(t)-z(t)}\right|dt=+\infty.$$

Thus, we cannot steer 
a measure which support is composed of two connected components 
 to a measure which support is connected
 with BV controls satisfying (i) and (ii), 
hence general results on exact controllability cannot hold.
However, the inverse is possible.
For example, if we denote by
$$\mu^0:=\mathds{1}_{(-1,1)}dx\mbox{ and } 
u(x):=\left\{\begin{array}{ll}
\sqrt{x}&\mbox{ if }x\geqslant 0,\\
0&\mbox{ otherwise},
\end{array}\right.$$
then $u$ satisfies (i) and (ii) and the unique solution $\mu$ to System \eqref{eq:transport} 
is given by
$$\mu(t)=\mathds{1}_{(-1,0)}(x)dx+\left(1-\frac{t}{2\sqrt{x}}\right)\times
\mathds{1}_{(\frac{t^2}{4},(\frac{t}{2}+1)^2)}(x)dx.$$

\subsection{Exact controllability with Borel controls}\label{sec:exact cont borel}

In this section, we prove Theorem \ref{th cont exact}, \textit{i.e.}  exact controllability of System 
\eqref{eq:transport} in the following sense: there exists a couple $(\mathds{1}_{\omega}u,\mu)$   solution to System \eqref{eq:transport} satisfying $\mu(T)=\mu^1$. 
Before proving Theorem \ref{th cont exact}, we need three useful results.

The first one is the following proposition, showing that we can store
the whole mass of $\mu^0$ in $\omega$, under Condition \ref{cond2}.
It is the analogue of Proposition \ref{prop1}. In this case, we control the whole mass, 
but we do not have necessarily uniqueness of the solution to System \eqref{eq:transport}.

\begin{prop}\label{prop1 exact}
Let $\mu^0\in\mc{P}_c(\mb{R}^d)$ satisfying the first item of Condition \ref{cond2}.
Then there exists a couple $(\mathds{1}_{\omega}u,\mu)$ composed of  
a Borel vector field $\mathds{1}_{\omega}u:\mb{R}^d\times\mb{R}^+\rightarrow\mb{R}^d$ 
and  a time-evolving measure $\mu$
being weak  solution to  System \eqref{eq:transport} 
 and satisfying  
\vspace*{-2mm} \begin{equation*}
 \supp(\mu(T_0^*))\subset\omega.
 \end{equation*}
\end{prop}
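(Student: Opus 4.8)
The plan is to mimic the construction of Proposition \ref{prop1}, but without worrying about keeping the flow well-defined for the mass that has already entered $\omega$, since uniqueness is no longer required. First I would recall from Step 1 of the proof of Proposition \ref{prop1} the controls $u_k := (\theta_k - 1)v$ and the associated flows $z_k(t) = \Phi_t^{v+u_k}(x^0)$, together with the comparison $z_k(t) = y(\gamma_k(t))$ where $y(t) = \Phi_t^v(x^0)$ and $\gamma_k$ is nondecreasing with $\gamma_k(t) \leqslant \min\{t, t_k(x^0)\}$. The key geometric fact, already established there, is that the range of $z_k$ is contained in $\{y(s) : s \in [0, t_k(x^0)]\}$, so that as soon as the uncontrolled trajectory enters $\omega_0$ at time $t_0(x^0) \leqslant T_0^*$, the controlled trajectory $z_k$ stays in a neighbourhood controlled by $\omega_k \subset \omega$, and in particular $z_k(T_0^*) \in \omega$ for $k$ large enough relative to $x^0$.

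Next I would handle the difficulty that, unlike in Proposition \ref{prop1}, we want \emph{all} of the mass to be inside $\omega$ at time $T_0^*$, whereas a single fixed $k$ only works uniformly on compact pieces of $\supp(\mu^0)$. Here the trick is that we are allowed to give up well-posedness: I would partition (up to a $\mu^0$-null set) the support into the pieces where a given $K$ suffices, and on the ``bad'' remainder we simply transport the mass into $\omega$ by an \emph{ad hoc} Borel vector field — possible because Condition \ref{cond2}(i) guarantees that every trajectory $\Phi_t^v(x^0)$ reaches $\omega_0$ within time $T_0^*$, so one can, for each $x^0$, follow $v$ until entry into $\omega_0$ and then stop. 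Concretely, one defines on each such set a time-dependent Borel field that agrees with the flow bringing the point into $\omega$ by time $T_0^*$. Because we no longer ask for a flow in the Cauchy--Lipschitz sense, overlaps and discontinuities between these pieces are harmless: we only need the superposition/gluing to produce a measure $\mu$ that is a weak solution in the sense of Definition \ref{def:weak}, which is checked by testing against $\psi \in \mc{C}^\infty_c(\mb{R}^d)$ on each piece and summing. Since $\mu^0 \in \mc{P}_c(\mb{R}^d)$ need not be absolutely continuous, I would build $\mu$ as the push-forward of $\mu^0$ under a Borel map $x^0 \mapsto X(x^0,\cdot)$ associating to $\mu^0$-a.e. $x^0$ a Lipschitz-in-time curve starting at $x^0$ and lying in $\omega$ at time $T_0^*$, set $\mu(t) := X(\cdot,t)\#\mu^0$, and let $\mathds{1}_\omega u(x,t)$ be a Borel selection of the velocity field carrying this family of curves (equal to zero outside $\omega$ once the curves have entered, and supported in $\omega$ throughout by the geometry of $\omega_0, \omega_k \subset\subset \omega$).

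The main obstacle I anticipate is the measurability bookkeeping: one must verify that the partition of $\supp(\mu^0)$ into the sets where a given $K$ works is Borel (this is the analogue of Step 2 of Proposition \ref{prop1}, using continuity of $x^0 \mapsto \Phi_s^v(x^0)$ and the sets $S_k$, $R_k$ defined there), that the assignment $x^0 \mapsto X(x^0,\cdot)$ is a Borel map into $\mc{C}^0([0,T_0^*];\mb{R}^d)$, and that the resulting $\mathds{1}_\omega u$ is genuinely a Borel vector field with $\int_0^{T_0^*}\!\int_{\mb{R}^d} |u(t)|\,d\mu(t)\,dt < \infty$ — the latter following from uniform boundedness of $v$ and of the finitely many controls $u_{K}$ on the good pieces, plus a uniform speed bound on the finitely many (or measurably parametrised) curves used on the bad remainder. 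Once these technical points are settled, the weak-solution property is immediate from the definition of $\mu$ as a push-forward of $\mu^0$ along Lipschitz curves solving $\dot X = u(X,t)$ piecewise in time, and $\supp(\mu(T_0^*)) \subset \omega$ holds by construction.
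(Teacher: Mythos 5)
There is a genuine gap at the heart of your construction: the passage from a family of curves $x^0\mapsto X(x^0,\cdot)$ to a single Borel vector field $u$ for which $(\mathds{1}_{\omega}u,\mu)$ is a weak solution in the sense of Definition \ref{def:weak}. Your gluing produces, at a given point $x$ and time $t$, mass travelling with \emph{different} velocities (e.g.\ mass from a ``good'' piece still moving with $\theta_K v$ superposed on mass from a ``bad'' piece that has already stopped at $x$; stopped mass accumulating on $\partial\omega_0$ under moving mass is unavoidable here, all the more so since $\mu^0$ may have atoms). In that situation no ``Borel selection'' of one of the carried velocities can work: testing against $\psi$ shows that the weak formulation forces $w(x,t)$ to be the \emph{mass-weighted barycentric average} of the velocities of all curves passing through $(x,t)$, which is in general none of the selected values. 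So the claim that ``overlaps and discontinuities between these pieces are harmless'' and that ``the weak-solution property is immediate'' is precisely where the proof is missing; one must prove that this averaged field exists, is Borel, is $\mu(t)$-integrable, and satisfies the continuity equation. The paper does exactly this by a different and cleaner route: it discards the good/bad decomposition entirely (which buys you nothing once you allow non-uniqueness), defines the single stopped flow $\Psi_t(x^0)$ that follows $\Phi^v_t(x^0)$ until the first hitting time of $\overline{\omega}_0$ and then freezes, sets $\mu(t):=\Psi_t\#\mu^0$, shows that $t\mapsto\mu(t)$ is Lipschitz for $W_2$ (hence an absolutely continuous curve in $\mc{P}_c(\mb{R}^d)$), and invokes the representation theorem \cite[Th.~8.3.1]{AGS05} to obtain the Borel field $w$. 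That theorem is the tool that resolves the velocity-ambiguity you dismissed.

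A second point you do not address, and which occupies most of the paper's proof, is why the resulting control is supported in $\omega$. Even once $w$ exists, one must show that $w(t)$ agrees $\mu(t)$-a.e.\ outside a neighbourhood $\omega_1$ of $\omega_0$ with $v$ --- or, when $v\notin\mathrm{Tan}_{\mu(t)}(\mc{P}_c(\mb{R}^d))$, with the tangential part $v_1$ of $v$, the normal part being divergence-free with respect to $\mu(t)$ and hence invisible in the weak formulation --- so that $u:=w-v$ (resp.\ $w-v_1$) vanishes outside $\omega$. This requires proving $\mu(t)_{|\omega_1^c}=(\Phi^v_t\#\mu^0)_{|\omega_1^c}$ and then identifying $w$ with $v_1$ there; your sketch asserts the localization of $u$ ``by the geometry of $\omega_0,\omega_k\subset\subset\omega$'' without an argument.
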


\begin{proof}
 For each $x^0\in\mb{R}^d$, we denote by 
$$\widetilde{t}^0(x^0):=\inf\{t\geqslant 0:\Phi_t^v(x^0)\in \overline{\omega}_0\}$$
and consider the application $\Psi_{\cdot}(x^0)$ defined for all $t\geqslant 0$ by 
$$\Psi_t(x^0)=\left\{\begin{array}{ll}
\Phi^{v}_t(x^0)&\mbox{ if }t\leqslant \widetilde{t}^0(x^0),\\\noalign{\smallskip}
\Phi^{v}_{\widetilde{t}^0(x^0)}(x^0)&\mbox{ otherwise.}\end{array}\right.$$
For all $t\geqslant 0$, the application $\Psi_t$ 
is a Borel map. Consider $\mu$ defined for all $t\geqslant 0$ by
$$\mu(t):=\Psi_t\#\mu^0.$$
We remark that, for all $t,s\in[0,T_0^*]$ such that 
$t\geqslant s$, 
\begin{equation}\label{push}
\mu(t)=\Psi_{t-s}\#\mu(s).
\end{equation}
Since $\Phi_{\cdot}^v(x^0)$ is Lipschitz, for all $x^0\in\mb{R}^d$ and $t\in[0,T^*_0]$, it holds
\begin{equation}\label{psi lip}
|\Psi_t(x^0)-x^0|\leqslant C\min\{t,t^0(x^0)\}\leqslant Ct.
\end{equation}
Combining \eqref{push} and \eqref{psi lip}, we deduce for all $t,s\in[0,T_0^*]$ with $s\leqslant t$
\begin{equation*}
W_2^2(\mu(s),\mu(t))
\leqslant \displaystyle\int_{\mb{R}^d}|\Psi_{t-s}(x)-x|^2~d\mu(s)\leqslant 
\sup_{x\in\mb{R}^d}|\Psi_{t-s}(x)-x|^2\leqslant C|t-s|^2.
\end{equation*}
We deduce that the metric derivative $|\mu'|$ of $\mu$ defined for all $t\in[0,T_0^*]$ by 
\begin{equation}\label{mu'}
|\mu'|(t):=\lim\limits_{s\rightarrow t}\dfrac{W_2(\mu(t),\mu(s))}{|t-s|}\end{equation}
is uniformly bounded on $[0,T_0^*]$.
Then $\mu$ is an absolute continuous curve on $\mc{P}_c(\mb{R}^d)$ (see \cite[Def. 1.1.1]{AGS05}).
Using \cite[Th. 8.3.1]{AGS05}, there exists a Borel vector $w:\mb{R}^d\times(0,T^*_0)\rightarrow\mb{R}^d$
satisfying 
$$\|w(t)\|_{L^2(\mu(t);\mb{R}^d)}\leqslant |\mu'|(t)\mbox{ a.e. }t\in[0,T^*_0]$$
and the couple $(w,\mu)$ is a weak solution to 
\begin{equation}\label{weak ambr}
	\left\{
	\begin{array}{ll}
\partial_t\mu +\nabla\cdot(w\mu)=0
&\mbox{ in }\mb{R}^d\times[0,T^*_0],\\\noalign{\smallskip}
\mu(0)=\mu^0&\mbox{ in }\mb{R}^d.
	\end{array}
	\right.
\end{equation}
Moreover, for all $t\in[0,T_0^*]$, it holds
$$w(t)\in \mbox{Tan}_{\mu(t)}(\mc{P}_c(\mb{R}^d))
:=\overline{\{\nabla\varphi:\varphi\in\mc{C}_c^{\infty}(\mb{R}^d)\}}^{L^2(\mu(t);\mb{R}^d)}.$$

Consider an open set $\omega_1$ of class $\mc{C}^{\infty}$ 
satisfying $\omega_0\subset\subset\omega_1\subset\subset\omega$.
We now prove that $w(t)$ coincides with $v(t)$ in $\supp(\mu(t))\backslash\overline{\omega}_1$ \textit{a.e.} $t\in[0,T_0^*]$,
\textit{i.e.} we can choose $u=0$ outside $\omega$.
Fix $t\in[0,T_0^*]$  and consider $x\in \supp(\mu(t))\cap\omega_1^c$.
There necessarily exists $x^0\in\supp(\mu^0)$ such that $\Phi^v_t(x^0)=x$, otherwise $x\in\partial\omega_0$.
Moreover for a $B:=B_r(x^0)$ with $r>0$ 
$\Phi_s^v(B)\subset\subset \omega_0^c$
for all $s\in[0,t]$, 
otherwise there exists $s\in[0,t]$ for which $\Phi^v_s(x^0)\in\partial\omega_0$.
Thus \begin{equation}\label{phi=psi}
\Phi_t^v=\Psi_t\mbox{ in }B.
\end{equation}
 We denote by $A:=\Phi_t^v(B)$.
%
%
We have now prove that 
\begin{equation}\label{psi A=phi A}
\Psi_t^{-1}(A)=(\Phi_t^v)^{-1}(A).
\end{equation}
Consider $x\in (\Phi_t^v)^{-1}(A)$. 
Equality \eqref{phi=psi} implies $\Phi_t^v(x)=\Psi_t(x)$. Then $x\in \Psi_t^{-1}(A)$.
Consider now $x\in \Psi_t^{-1}(A)$, which means $\Psi_t(x)\in A$. 
Using the fact that $A\cap\overline{\omega}_0\neq0$, $t<\widetilde{x}^0(x)$. 
Then $\Psi_t(x)=\Phi_t^v(x)$ and $x\in (\Phi_t^v)^{-1}(A)$.
Thus \eqref{psi A=phi A} holds.
%
By definition of the push forward,
$$\mu_{|A}(t)=\Psi_t\#(\mu^0_{|\Psi_t^{-1}(A)})
\mbox{ and }
(\Phi_t^v\#\mu^0)_{|A}=\Phi_t^v\#(\mu^0_{|\Phi_t^{-1}(A)}).$$
Since $\Psi_t=\Phi_t^v$ on the set $B=(\Phi_t^v)^{-1}(A)=\Psi_t^{-1}(A)$, this implies 
$$\mu_{|A}(t)=\Phi^v_{t}\#\mu^0_{|A}.$$
By compactness of $\supp(\mu(t)\cap\omega_1^c$, it holds
$$\mu(t)_{|\omega_1^c}=(\Phi^v_{t}\#\mu^0)_{|\omega_1^c}.$$
We deduce that, for all $\varphi\in\mc{C}^{\infty}_c(\mb{R}^d)$ 
such that  $\supp(\varphi)\subset\subset\omega_1^c$,
$$\dfrac{d}{dt}\displaystyle\int_{\mb{R}^d}\varphi~d\mu(t)
=\displaystyle\int_{\mb{R}^d}\langle\nabla\varphi,w\rangle~d\mu(t)
\mbox{~~and~~}
\dfrac{d}{dt}\displaystyle\int_{\mb{R}^d}\varphi~d\mu(t)
=\displaystyle\int_{\mb{R}^d}\langle\nabla\varphi,v\rangle~d\mu(t).$$

If it holds
$v\in \mbox{Tan}_{\mu(t)}(\mc{P}_c(\mb{R}^d))$,
then $w(t)=v$, $\mu(t)$ a.e. in $\overline{\omega_1}^c$,
and we conclude by taking $u:=w-v$ which is supported in $\omega$.

If now $v\not\in \mbox{Tan}_{\mu(t)}(\mc{P}_c(\mb{R}^d))$, 
we can write $v=v_1+v_2$ with $v_1\in \mbox{Tan}_{\mu(t)}(\mc{P}_c(\mb{R}^d))$ and $v_2\in \mbox{Tan}_{\mu(t)}(\mc{P}_c(\mb{R}^d))^{\perp}$,
where 
$$\mbox{Tan}_{\mu(t)}(\mc{P}_c(\mb{R}^d))^{\perp}
=\{\nu\in L^2(\mu(t):\mb{R}^d):\nabla\cdot(\nu\mu(t))=0\}$$
(see for instance \cite[Prop. 8.4.3]{AGS05}). In other terms, $v_2$ plays no role in the weak formulation of the continuity equation.
Thus, with the same argument, we can prove that $w(t)=v_1$, $\mu(t)$ a.e. in $\overline{\omega_1}^c$
and we conclude by tacking $u:=w-v_1$.
\end{proof}

The second useful result for the proof of Theorem \ref{th cont exact} allows 
to exactly steer a measure contained in $\omega$ to  a nonempty open convex set $S\subset\subset \omega$.
It is the analogue of Proposition \ref{prop2}. In this case, 
as in Proposition \ref{prop1 exact}, we control the whole mass, 
but we do not have necessarily uniqueness of the solution to System \eqref{eq:transport}.

\begin{prop}\label{prop2 exact}
Let $\mu^0\in\mc{P}_c(\mb{R}^d)$ satisfying $\supp(\mu^0)\subset\omega$.
Define a nonempty open convex set $S$ strictly included in $\omega\backslash\supp(\mu^0)$ and choose $\delta>0$.
Then there exists a couple $(\mathds{1}_{\omega}u,\mu)$ composed of  
a Borel vector field $\mathds{1}_{\omega}u:\mb{R}^d\times\mb{R}^+\rightarrow\mb{R}^d$ 
and  a time-evolving measure $\mu$
being weak  solution to  System \eqref{eq:transport} 
  satisfying  
  \vspace*{-2mm}\begin{equation*}
  \supp(\mu(\delta))\subset S.
  \end{equation*}
\end{prop}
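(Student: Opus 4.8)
The plan is to mimic the structure of the proof of Proposition \ref{prop2}, replacing the AC-measure and Lipschitz-flow arguments by the measure-theoretic ones already developed in Proposition \ref{prop1 exact}. First I would fix the same geometric data as in the proof of Proposition \ref{prop2}: an open set $S_0$ of class $\mc{C}^\infty$ with $S_0\subset\subset S$, an open set $\omega_1$ of class $\mc{C}^\infty$ with $\supp(\mu^0)\cup S\subset\subset\omega_1\subset\subset\omega$, and the function $\eta\in\mc{C}^2(\overline{\omega_1})$ furnished by \cite[Lemma 1.1, Chap. 1]{FI96} satisfying $\kappa_0\leqslant|\nabla\eta|\leqslant\kappa_1$ in $\omega_1\backslash S_0$, $\eta>0$ in $\omega_1$ and $\eta=0$ on $\partial\omega_1$. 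Then I would introduce, for $k\in\mb{N}^*$, the Lipschitz uniformly bounded field $u_k$ equal to $k\nabla\eta-v$ in $\omega_1$ and to $0$ in $\omega^c$ (with a smooth transition in between), exactly as before. The computation with $f_n(t):=k_n\eta(z_{k_n}(t))$ shows, as in Proposition \ref{prop2}, that there is a $K\in\mb{N}^*$ and, for each $x^0\in\supp(\mu^0)$, a time $t^0(x^0)\in(0,\delta)$ such that $\Phi^{v+u_K}_{t^0(x^0)}(x^0)\in S_0$; by continuity and compactness this holds uniformly, i.e. the first item of Condition \ref{cond2} holds with $\omega,\omega_0,T_0^*,v$ replaced by $S,S_0,\delta,v+u_K$.

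Second, I would invoke Proposition \ref{prop1 exact} — not Proposition \ref{prop1} — with those substituted data. Since $\mu^0\in\mc{P}_c(\mb{R}^d)$ (no absolute continuity assumed), Proposition \ref{prop1 exact} produces a Borel field $\mathds{1}_S \tilde u$ and a weak solution $\tilde\mu$ of the continuity equation with velocity $v+u_K+\mathds{1}_S\tilde u$ on $[0,\delta]$, starting from $\Phi^{v+u_K}_{\cdot}\#\mu^0$-type data, such that $\supp(\tilde\mu(\delta))\subset S_0\subset S$. Concatenating: the control is $\mathds{1}_\omega u$ with $u:=u_K+\mathds{1}_S\tilde u$ on the time interval $[0,\delta]$, which is a Borel field supported in $\omega$ (indeed $u_K$ is supported in $\omega$ and $\tilde u$ in $S\subset\subset\omega$), the measure is the corresponding weak solution, and $\supp(\mu(\delta))\subset S$ as required. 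One small point to check is that the field $v+u_K$ is Lipschitz and uniformly bounded so that its flow $\Phi^{v+u_K}_t$ is well defined and the hypotheses of Proposition \ref{prop1 exact} (stated for a general Lipschitz drift in place of $v$) genuinely apply; this is immediate from the construction of $u_K$.

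The main obstacle, and the reason one cannot simply quote Proposition \ref{prop2}, is that here $\mu^0$ need not be absolutely continuous, so the flow-range argument of Step 1 of Proposition \ref{prop1} (used at the end of the proof of Proposition \ref{prop2}) and the statement ``$\mu^0(A)=\varepsilon$'' lose their meaning: we want to transport \emph{all} the mass into $S$, accepting loss of uniqueness. The technical heart is therefore the step already carried out in Proposition \ref{prop1 exact} — freezing each trajectory at its first hitting time of $\overline{S_0}$ via the Borel map $\Psi_t$, checking that $t\mapsto\Psi_t\#\mu^0$ is an absolutely continuous curve in $(\mc{P}_c,W_2)$ with bounded metric derivative, and applying \cite[Th. 8.3.1]{AGS05} to extract a Borel velocity whose restriction outside $\omega_1$ agrees with the uncontrolled drift. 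Once Proposition \ref{prop1 exact} is available, as it is in this section, no new difficulty arises and the proof is a routine substitution of parameters, so I would keep the write-up short and refer back to the proofs of Propositions \ref{prop2} and \ref{prop1 exact} for the repeated arguments.
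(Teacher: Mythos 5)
Your proposal is correct and follows essentially the same route as the paper: the same geometric setup ($S_0$, $\omega_1$, the weight $\eta$ and the field equal to $k\nabla\eta$ in $\omega_1$), the same hitting-time argument to get all of $\supp(\mu^0)$ into $S_0$ before time $\delta$ for $K$ large, and then the stopped-flow/AGS machinery of Proposition \ref{prop1 exact} applied with $\omega,\omega_0,T_0^*,v$ replaced by $S,S_0,\delta,v+u_K$. The only difference is presentational: the paper inlines the stopped flow $\Psi_{K,t}$ and says ``as in the proof of Proposition \ref{prop1 exact}'' rather than quoting that proposition with substituted data, which is exactly what you do.
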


\begin{proof}
Consider $S_0$ a nonempty open set of $\mb{R}^d$ of class $\mc{C}^{\infty}$ strictly included in $S$ and 
 $\omega_1$ an open set of $\mb{R}^d$ of class $\mc{C}^{\infty}$ satisfying 
\vspace*{-2mm}\begin{equation*}
\supp(\mu^0)\cup S\subset\subset\omega_1\subset\subset\omega.
\vspace*{-2mm}\end{equation*}
An example is given in Figure \ref{fig:constr S}.
Consider $\eta \in\mathcal{C}^2(\overline{\omega_1})$ defined in the proof of Proposition \ref{prop2} 
satisfying \eqref{prop eta}.
For all $k\in\mb{N}^*$, we consider a Lipschitz vector field  $v_k$ satisfying
\vspace*{-2mm}\begin{equation*}
v_k:=\left\{\begin{array}{ll}
 k\nabla \eta &\mbox{in}~ \omega_1,\\
v&\mbox{in}~\omega^c.
\end{array}\right.
\vspace*{-2mm}\end{equation*}
We denote by 
$$\widetilde{t}^0_k(x^0):=\inf\{t\geqslant 0:\Phi_t^{v_k}(x^0)\in \overline{S}_0\}.$$
For all $x^0\in\mb{R}^d$ and all $k\in\mb{N}^*$, consider the application $\Psi_{k,\cdot}(x^0)$ defined for all $t\geqslant 0$ by 
$$\Psi_{k,t}(x^0)=\left\{\begin{array}{ll}
\Phi^{v_k}_t(x^0)&\mbox{ if }t\leqslant \widetilde{t}^0_k(x^0),\\\noalign{\smallskip}
\Phi^{v_k}_{\widetilde{t}^0_k(x^0)}(x^0)&\mbox{ otherwise.}\end{array}\right.$$
Using the same argument as in the proof of Proposition \ref{prop2}, 
for $K$ large enough, $\Psi_{K,\delta}(x^0)$ belongs to $S$ for all $x^0\in\supp(\mu^0)$.
Consider $\mu$ defined for all $t\in(0,\delta)$ by $\mu(t):=\Psi_{K,t}\#\mu^0$. 
As in the proof of Proposition \ref{prop1 exact}, there exists a vector field $u_K$ 
such that $(u_K,\mu)$ is a weak solution to System \eqref{weak ambr}. Moreover 
$u_K(t)=v_K$, $\mu(t)$ a.e.  in $\overline{S}^c$ and a.e. $t\in[0,\delta]$.
Thus, we conclude that  $(\mathds{1}_{\omega}(u_K-v_K),\mu)$ is solution to System \eqref{eq:transport} and $\supp(\mu(\delta))\subset S$.
\end{proof}

The third useful result for the proof of Theorem \ref{th cont exact} allows 
to exactly steer a measure contained in a nonempty open convex set $S\subset\subset \omega$ to a given measure contained in $S$.
It is the analogue of Proposition \ref{prop dim=d}. In this situation, we obtain exact controllability of System \eqref{eq:transport},
but, again, we do not have necessarily uniqueness of the solution to System \eqref{eq:transport}.

\begin{prop}\label{prop3 exact}
Let $\mu^0,~\mu^1\in\mc{P}_c(\mb{R}^d)$ satisfying $\supp(\mu^0)\subset S$ and $\supp(\mu^1)\subset S$
for a nonempty open convex set $S$ strictly included in $\omega$.  Choose $\delta>0$.
Then there exists a couple $(\mathds{1}_{\omega}u,\mu)$ composed of  
a Borel vector field $\mathds{1}_{\omega}u:\mb{R}^d\times\mb{R}^+\rightarrow\mb{R}^d$ 
and  a time-evolving measure $\mu$
being weak  solution to  System \eqref{eq:transport} 
 and satisfying  
\vspace*{-2mm} \begin{equation*}
 \supp(\mu)\subset S\mbox{ and }\mu(\delta)=\mu^1.
 \end{equation*}
\end{prop}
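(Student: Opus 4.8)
The plan is to move $\mu^0$ onto $\mu^1$ along the straight--line interpolation between them, which remains inside $S$ precisely because $S$ is convex, and then to extract a Borel velocity field from this curve via the representation theorem of Ambrosio--Gigli--Savar\'e, exactly as in the proofs of Propositions \ref{prop1 exact} and \ref{prop2 exact}. Concretely, I would fix any transference plan $\pi\in\Pi(\mu^0,\mu^1)$ and, for $s\in[0,1]$, let $e_s\colon\mb{R}^d\times\mb{R}^d\to\mb{R}^d$ be the evaluation map $e_s(x,y):=(1-s)x+sy$, and then set $\mu(t):=(e_{t/\delta})\#\pi$ for $t\in[0,\delta]$. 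Since $e_0$ and $e_1$ are the two canonical projections, $\mu(0)=\mu^0$ and $\mu(\delta)=\mu^1$. Moreover $\supp(\pi)\subset\supp(\mu^0)\times\supp(\mu^1)$ is compact and contained in $S\times S$, so convexity of $S$ gives $e_s(\supp(\pi))\subset S$ for every $s\in[0,1]$, hence $\supp(\mu(t))\subset S$ for all $t\in[0,\delta]$.

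Next I would check that $t\mapsto\mu(t)$ is Lipschitz, hence absolutely continuous, for the Wasserstein distance on $\mc{P}_c(\mb{R}^d)$: since $(e_{t/\delta},e_{s/\delta})\#\pi$ is a transference plan between $\mu(t)$ and $\mu(s)$,
\begin{equation*}
W_2(\mu(t),\mu(s))^2\leqslant\iint_{\mb{R}^d\times\mb{R}^d}\big|e_{t/\delta}(x,y)-e_{s/\delta}(x,y)\big|^2\,d\pi=\frac{|t-s|^2}{\delta^2}\iint_{\mb{R}^d\times\mb{R}^d}|x-y|^2\,d\pi,
\end{equation*}
and the right--hand integral is finite because $\mu^0$ and $\mu^1$ are compactly supported; in particular the metric derivative $|\mu'|$ is bounded on $[0,\delta]$. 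Arguing exactly as in the proof of Proposition \ref{prop1 exact}, I would then invoke \cite[Th. 8.3.1]{AGS05} to obtain a Borel vector field $w\colon\mb{R}^d\times(0,\delta)\to\mb{R}^d$ with $\|w(t)\|_{L^2(\mu(t);\mb{R}^d)}\leqslant|\mu'|(t)$ for a.e. $t$ and such that $(w,\mu)$ is a weak solution of $\partial_t\mu+\nabla\cdot(w\mu)=0$ with $\mu(0)=\mu^0$; by weak continuity of $t\mapsto\mu(t)$ this gives $\mu(\delta)=\mu^1$.

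Finally I would localize the control: putting $u:=w-v$, the field $\mathds{1}_{\omega}u$ is Borel and supported in $\omega$, and since $\supp(\mu(t))\subset S\subset\subset\omega$ for every $t$ one has $v+\mathds{1}_{\omega}u=w$ on a neighbourhood of $\supp(\mu(t))$ together with $\int_0^{\delta}\int_{\mb{R}^d}|v+\mathds{1}_{\omega}u|\,d\mu(t)\,dt=\int_0^{\delta}\int_S|w|\,d\mu(t)\,dt<\infty$, so the weak formulation of \eqref{eq:transport} for $(\mathds{1}_{\omega}u,\mu)$ reduces to that of $(w,\mu)$ and is satisfied; combined with $\supp(\mu)\subset S$ and $\mu(\delta)=\mu^1$, this proves the proposition. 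I expect no serious difficulty here; the single delicate point is structural rather than computational, namely that the convexity of $S$ is used exactly once, to keep the linear interpolation inside $S$ (without it one would have to reroute the mass as in Proposition \ref{prop2 exact}), while, unlike in Proposition \ref{prop1 exact}, no effort is required to make $w$ coincide with $v$ outside $\omega$ since the whole evolution stays in $S\subset\subset\omega$ and the localization is automatic.
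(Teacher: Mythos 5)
Your proof is correct and follows essentially the same route as the paper: linear interpolation of a transference plan between $\mu^0$ and $\mu^1$ (kept inside $S$ by convexity), the representation theorem \cite[Th. 8.3.1]{AGS05} to extract a Borel velocity field from the absolutely continuous curve, and a cutoff to localize the control in $\omega$. The only (harmless) differences are that you use an arbitrary plan with a direct coupling estimate for the $W_2$-Lipschitz bound, where the paper takes the optimal plan and quotes the constant-speed geodesic property, and that you localize with $\mathds{1}_{\omega}(w-v)$ rather than a smooth cutoff.
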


\begin{proof}
Let $\pi$ be the optimal plan given in \eqref{def:wass plan} 
associated to the Wasserstein distance between $\mu^0$ and $\mu^1$. 
For $i\in\{1,2\}$, we denote by $p_i:\mb{R}^d\times\mb{R}^d\rightarrow\mb{R}^d$ the projection operator defined by
$$p_i:(x_1,x_2)\mapsto x_i.$$
 Consider the time-evolving measure $\mu$ defined for all $t\in[0,\delta]$ by
\begin{equation}\label{rho inter}
\mu(t):=\dfrac{1}{\delta}\left[(\delta-t)p_1 + tp_2\right]\#\pi.
\end{equation}
Using \cite[Th. 7.2.2]{AGS05}, 
$\mu$ is a constant speed geodesic connecting $\mu^0$ and $\mu^1$
in  $\mc{P}_c(\mb{R}^d)$,
\textit{i.e.} for all $s,t\in[0,\delta]$
\vspace*{-2mm}\begin{equation*}
W_2(\mu(t),\mu(s))= \dfrac{(t-s)}{\delta}W_2(\mu^0,\mu^1).
\end{equation*}
We deduce that the metric derivative $|\mu'|$ of $\mu$ 
(see \eqref{mu'})
is uniformly bounded on $[0,\delta]$.
Then $\mu$ is an absolute continuous curve on $\mc{P}_c(\mb{R}^d)$ (see \cite[Def. 1.1.1]{AGS05}).
Thus, using  \cite[Th. 8.3.1]{AGS05},
 there exists a Borel vector field 
 $w:\mb{R}^d\times(0,\delta)\rightarrow\mb{R}^d$
such that 
$$\|w(t)\|_{L^2(\mu(t);\mb{R}^d)}\leqslant |\mu'|(t)\mbox{ a.e. }t\in[0,\delta]$$
and the couple $(w,\mu)$ is a weak solution to 
\begin{equation*}
	\left\{
	\begin{array}{ll}
\partial_t\mu +\nabla\cdot(w\mu)=0
&\mbox{ in }\mb{R}^d\times[0,\delta],\\\noalign{\smallskip}
\mu(0)=\mu^0&\mbox{ in }\mb{R}^d.
	\end{array}
	\right.
\end{equation*}
Consider $\theta\in\mc{C}^{\infty}_c(\mb{R}^d)$ such that 
\begin{equation*}
0\leqslant \theta\leqslant 1,~~
\theta = 1\mbox{ in }S\mbox{~~and~~}
\theta = 0\mbox{ in }\omega^c.
\end{equation*} 
We remark that $\mu$ is supported in $S$, 
then the couple $(\mathds{1}_{\omega}u,\mu)$ 
with 
\vspace*{-2mm}\begin{equation*}
u:=\theta\times (w-v)
\vspace*{-2mm}\end{equation*} 
is solution to 
\vspace*{-2mm}\begin{equation*}
	\left\{
	\begin{array}{ll}
\partial_t\mu +\nabla\cdot((v+\mathds{1}_{\omega}u)\mu)=0
&\mbox{ in }\mb{R}^d\times[0,\delta],\\\noalign{\smallskip}
\mu(0)=\mu^0&\mbox{ in }\mb{R}^d.
	\end{array}
	\right.
\end{equation*}
$\left.\right.$
\end{proof}

\vspace*{-2mm} \begin{rmq}
It is well know (see for instance \cite{S15}) that 
for two measures compactly supported with the same total mass $\mu^0,\mu^1$, 
the Wasserstein distance can be express as follows
\vspace*{-2mm}\begin{equation}\label{eq:benamou brenier}
\begin{array}{c}W_2(\mu^0,\mu^1)
=\min\limits_{(v,\mu)\in\mc{B}}\left\{\left(\displaystyle\int_0^1\int_{\mb{R}^d}|v(t)|^2d\mu(t)dt\right)^{1/2}:
\right.\hspace*{3cm}\\\hspace*{3cm}\left.
\partial_t\mu+\nabla\cdot(v\mu)=0,~\mu(0)=\mu^0,~\mu(1)=\mu^1\right\},\end{array}
\end{equation}
where $\mc{B}$ is the set of couples $(v,\mu)$ composed of a time evolving measure $\mu(t)$ 
and a Borel vector field $v:\mb{R}^d\times\mb{R}\rightarrow\mb{R}^d$ satisfying 
\vspace*{-2mm}\begin{equation*}\int_0^1\int_{\mb{R}^d}|v(t)|^2d\mu(t)dt<\infty.\end{equation*}
Equality \eqref{eq:benamou brenier} is called the \textit{Benamou-Brenier Formula}.
In the proof of the Proposition \ref{prop3 exact}, it is possible to replace the definition in \eqref{rho inter} of $\mu$ by the minimizer of \eqref{eq:benamou brenier}
which already satisfies the continuity equation.
\end{rmq}

We now have all the tools to prove Theorem \ref{th cont exact}.
\begin{proof}[Proof of Theorem \ref{th cont exact}]
Consider $\mu^0$ and $\mu^1$ satisfying Condition \ref{cond1}.
Applying Lemma \ref{lemma cond}, Condition \ref{cond2} holds for some $\omega_0$, $T_0^*$ and $T_1^*$.
Let $T:=T_0^*+T_1^*+\delta$ with $\delta>0$ and  $T_0,~T_1,~T_2,~T_3,~T_4,~T_5$ be the times given in the proof of Theorem \ref{th cont approx}.
Using  Proposition \ref{prop1 exact} on $[T_0,T_1]\cup[T_4,T_5]$, 
there exist $\rho_1\in\mc{C}^0([T_0,T_1],\mc{P}_c(\mb{R}^d))$, $\rho_5\in\mc{C}^0([T_4,T_5],\mc{P}_c(\mb{R}^d))$
and some space-dependent Borel controls $u^1,~u^5$  
 with 
 \vspace*{-2mm}\begin{equation*}
 \supp(u^1)\cup\supp(u^5)\subset\omega
\vspace*{-2mm} \end{equation*}
  such that $(\mathds{1}_{\omega}u^1,\rho_1)$
  is a weak solution forward in time to
\vspace*{-2mm}\begin{equation*}
	\left\{
	\begin{array}{ll}
\partial_t\rho_1 +\nabla\cdot((v+\mathds{1}_{\omega}u^1)\rho_1)=0
&\mbox{ in }\mb{R}^d\times[T_0,T_1],\\\noalign{\smallskip}
\rho_1(T_0)=\mu^0&\mbox{ in }\mb{R}^d
	\end{array}
	\right.
\vspace*{-2mm}\end{equation*}
and $(\mathds{1}_{\omega}u^5,\rho_5)$ is a weak solution backward in time to
\vspace*{-2mm}\begin{equation*}
	\left\{
	\begin{array}{ll}
\partial_t\rho_5 +\nabla\cdot((v+\mathds{1}_{\omega}u^5)\rho_5)=0
&\mbox{ in }\mb{R}^d\times[T_4,T_5],\\\noalign{\smallskip}
\rho_5(T_5)=\mu^1&\mbox{ in }\mb{R}^d.
	\end{array}
	\right.
\vspace*{-2mm}\end{equation*}
Moreover $\Supp(\rho_1(T_1))\subset \omega$ and $\Supp(\rho_5(T_4))\subset \omega.$ 
Consider a nonempty open convex set $S$ strictly included in $\omega\backslash\omega_0$.
Using  Proposition \ref{prop2 exact} on $[T_1,T_2]\cup[T_3,T_4]$, 
there exist $\rho_2\in\mc{C}^0([T_1,T_2],\mc{P}_c(\mb{R}^d))$, $\rho_4\in\mc{C}^0([T_3,T_4],\mc{P}_c(\mb{R}^d))$
and some space-dependent Borel controls $u^2,~u^4$  
 with $$\supp(u^2)\cup\supp(u^4)\subset\omega$$ such that $(\mathds{1}_{\omega}u^2,\rho_2)$ 
 is a weak solution forward in time to
\vspace*{-2mm}\begin{equation*}
	\left\{
	\begin{array}{ll}
\partial_t\rho_2 +\nabla\cdot((v+\mathds{1}_{\omega}u^2)\rho_2)=0
&\mbox{ in }\mb{R}^d\times[T_1,T_2],\\\noalign{\smallskip}
\rho_2(T_1)=\rho_1(T_1)&\mbox{ in }\mb{R}^d
	\end{array}
	\right.
\vspace*{-2mm}\end{equation*}
and $(\mathds{1}_{\omega}u^4,\rho_4)$ is a weak solution backward in time to
\vspace*{-2mm}\begin{equation*}
	\left\{
	\begin{array}{ll}
\partial_t\rho_4 +\nabla\cdot((v+\mathds{1}_{\omega}u^4)\rho_4)=0
&\mbox{ in }\mb{R}^d\times[T_3,T_4],\\\noalign{\smallskip}
\rho_4(T_4)=\rho_5(T_4)&\mbox{ in }\mb{R}^d.
	\end{array}
	\right.
\vspace*{-2mm}\end{equation*}
Moreover $\Supp(\rho_2(T_2))\subset S$ and $\Supp(\rho_4(T_3))\subset S.$ 
Using  Proposition \ref{prop3 exact} on $[T_2,T_3]$, 
there exist $\rho_3\in\mc{C}^0([T_2,T_3],\mc{P}_c(\mb{R}^d))$ satisfying $\supp(\rho_3)\subset S$
and a Borel control $u^3$  
 with $$\supp(u^3)\subset\omega$$ such that $(\mathds{1}_{\omega}u^3,\rho_3)$ is a weak solution forward in time to
\begin{equation*}
	\left\{
	\begin{array}{ll}
\partial_t\rho_3 +\nabla\cdot((v+\mathds{1}_{\omega}u^3)\rho_3)=0
&\mbox{ in }\mb{R}^d\times[T_2,T_3],\\\noalign{\smallskip}
\rho_3(T_2)=\rho_2(T_2)&\mbox{ in }\mb{R}^d
	\end{array}
	\right.
\end{equation*}
and satisfies $\rho_3(T_3)=\rho_4(T_3)$.
Thus the couple $(\mathds{1}_{\omega}u,\mu)$ defined by
\begin{equation*}
(\mathds{1}_{\omega}u,\mu)
=(\mathds{1}_{\omega}u^i,\rho_i) \mbox{ in }\mb{R}^d\times [T_{i-1},T_i),~i\in\{1,...,5\}
\end{equation*}
is a weak solution to System \eqref{eq:transport} 
and satisfies $\mu(T)=\mu^1.$
\end{proof}

\section*{Acknowledgments} 
The authors thank F. Santambrogio 
  for his interesting comments and suggestions.

\bibliographystyle{plain}
\bibliography{references.bib}
\end{document}